\title[Generators for Hall algebras]{Generators for Hall algebras of surfaces}
\author{Tudor P\u adurariu}
\address{Department of Mathematics, Columbia University, 
2990 Broadway, New York, NY 10027}
\email{tgp2109@columbia.edu}
\newtheorem{thm}{Theorem}[section]
\newtheorem{cor}[thm]{Corollary}
\newtheorem{prop}[thm]{Proposition}
\theoremstyle{definition}
\newtheorem{thm*}[thm]{Theorem$^*$}
\newcommand{\comment}[1]{}
\renewcommand{\leq}{\leqslant}
\renewcommand{\geq}{\geqslant}
\newcommand{\F}{\mathcal{F}}
\newcommand{\PP}{\operatorname{\mathfrak P}}
\newcommand{\A}{\mathcal{A}}
\newcommand{\B}{\mathcal{B}}
\newcommand{\X}{\mathcal{X}}
\newcommand{\MM}{\mathfrak{M}}
\newcommand{\C}{\mathbb{C}}
\newcommand{\ee}{\underline{e}}
\newcommand{\dd}{\underline{d}}
\begin{document}
\maketitle

\begin{abstract}
For a smooth surface $S$, Porta--Sala defined a categorical Hall algebra generalizing previous work in K-theory of Zhao and Kapranov--Vasserot. We construct semi-orthogonal decompositions for categorical Hall algebras of points on $S$. We refine these decompositions in K-theory for a topological K-theoretic Hall algebra.
\end{abstract}

\section{Introduction}

\subsection{Hall algebras of surfaces}
Let $S$ be a smooth surface over $\mathbb{C}$.
For an algebraic class $\beta\in H^2(S,\mathbb{Z})\oplus H^4(S,\mathbb{Z})$, let $\mathfrak{M}_\beta$ be the (derived) moduli stack of coherent sheaves on $S$ with support class $\beta$. For classes $\beta$ and $\gamma$, there are maps 
\begin{equation}\label{maps}
\mathfrak{M}_{\beta}\times \mathfrak{M}_{\gamma}\xleftarrow{q_{\beta, \gamma}}\mathfrak{M}_{\beta, \gamma}\xrightarrow{p_{\beta,\gamma}}\mathfrak{M}_{\beta+\gamma},
\end{equation}
where $\mathfrak{M}_{\beta, \gamma}$ is the corresponding stack of extensions. The map $q_{\beta,\gamma}$ is quasi-smooth and the map $p_{\beta,\gamma}$ is proper, so they induce functors 
\[m_{\beta, \gamma}:=p_{\beta,\gamma*}q^*_{\beta,\gamma}:
D^b\left(\mathfrak{M}_{\beta}\right)\otimes D^b\left(\mathfrak{M}_{\gamma}\right)\to
D^b\left(\mathfrak{M}_{\beta+\gamma}\right).\]
Porta--Sala \cite{PS} showed that the
category $\bigoplus_{\beta} D^b\left(\mathfrak{M}_{\beta}\right)$ is monoidal with respect to the functors $m_{\beta,\gamma}$. Taking the Grothendieck group of this category, we obtain the K-theoretic Hall algebra (KHA) of a surface which has been studied by Zhao \cite{Z} for sheaves of dimension zero and by Kapranov--Vasserot \cite{KV}. 

Categorical/ K-theoretic Hall algebras for quivers with potential are local version of these categories/ algebras. Particular cases of (equivariant) KHAs of quivers with potentials, namely preprojective KHAs, are expected to be positive parts of quantum affine groups constructed by Okounkov--Smirnov \cite{os}. 

It is interesting to see whether KHAs of surfaces have properties similar to (positive parts of) quantum groups, for example if they satisfy a PBW theorem, if they are deformations of (the universal enveloping algebra of) a K-theoretic Lie algebra associated to the surface, or if they can be doubled to a Hopf algebra.



\subsection{Semi-orthogonal decompositions of the HA}
Let $S$ be a smooth surface over $\mathbb{C}$.
For $d\in \mathbb{N}$, let $\mathfrak{M}_d$ be the moduli stack of dimension zero sheaves of length $d$ on $S$. Let $\text{HA}(S):=\bigoplus_{d\in\mathbb{N}}D^b\left(\mathfrak{M}_d\right)$.
In Subsection \ref{M}, we define categories $\mathbb{M}(d)_{w}\subset D^b\left(\mathfrak{M}_d\right)_w$ for $w\in\mathbb{Z}$. Consider the map \begin{equation}\label{ch}
    \pi_d: \mathfrak{M}_d\to M_d:=\text{Sym}^d(S).
\end{equation} 
Let $V^d_w$ be the set of partitions $A=(d_i, w_i)_{i=1}^k$ of $(d, w)$ with \[\frac{w_1}{d_1}>\cdots>\frac{w_k}{d_k}.\] For $A\in V^d_w,$ let $\mathbb{M}_A:=\otimes_{i=1}^k \mathbb{M}\left(d_i\right)_{w_i}$.

\begin{thm}\label{thm}
 There is a semi-orthogonal decomposition
\[D^b\left(\mathfrak{M}_d\right)_w=\Big\langle 
\mathbb{M}_A\Big\rangle,\]
where the right hand side is after all partitions $A\in V^d_w$.
The order of categories is as in Subsection \ref{compadm}. The semi-orthogonal decomposition holds over $M_d$ in the following sense. Let $A<B$ be two partitions and let $\mathcal{F}\in \mathbb{M}_A$ and $\mathcal{G}\in \mathbb{M}_B$. Then \[R\pi_{d*}\left(R\mathcal{H}om_{\mathfrak{M}_d}(\mathcal{F}, \mathcal{G})\right)=0.\]
\end{thm}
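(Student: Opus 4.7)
The plan is to combine the Porta--Sala Hall multiplication with a Halpern--Leistner style weight-vanishing / grade-restriction argument on the scaling $\mathbb{G}_m$-action on $\mathfrak{M}_d$. The subscript $w$ is the eigenvalue of this scaling, weights add under the Hall product, the slope $w_i/d_i$ plays the role of an HN-type invariant, and the strict inequality $w_1/d_1>\cdots>w_k/d_k$ encodes positivity of the cocharacter that contracts a $k$-step filtration onto its associated graded.

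\textbf{Construction of the pieces.} For each $A=(d_i,w_i)_{i=1}^k\in V^d_w$, I iterate \eqref{maps} to form
\[
\mathfrak{M}_{d_1}\times\cdots\times\mathfrak{M}_{d_k}\xleftarrow{q_A}\mathfrak{M}_A\xrightarrow{p_A}\mathfrak{M}_d,
\]
where $\mathfrak{M}_A:=\mathfrak{M}_{d_1,\dots,d_k}$ parametrizes sheaves equipped with a $k$-step filtration of shape $(d_1,\dots,d_k)$, with $q_A$ quasi-smooth and $p_A$ proper. The torus $\mathbb{G}_m^k$ scaling the $k$ graded pieces acts on $\mathfrak{M}_A$, and $q_A^{\,*}\bigl(\mathbb{M}(d_1)_{w_1}\boxtimes\cdots\boxtimes\mathbb{M}(d_k)_{w_k}\bigr)$ sits in its $(w_1,\dots,w_k)$-eigencomponent. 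I would then identify $\mathbb{M}_A=\bigotimes_i\mathbb{M}(d_i)_{w_i}$ with its essential image in $D^b(\mathfrak{M}_d)_w$ under the induction functor $p_{A*}q_A^{\,*}$, which indeed lands in weight $\sum w_i=w$.

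\textbf{Semi-orthogonality.} For $A<B$ in $V^d_w$ and $\F=p_{A*}q_A^{\,*}\F'$, $\G=p_{B*}q_B^{\,*}\G'$, base change and the adjunctions for the proper $p_A,p_B$, applied to the Cartesian square
\[
\begin{tikzcd}
\mathfrak{M}_{A,B}\arrow[r,"\rho_A"]\arrow[d,"\rho_B"'] & \mathfrak{M}_A\arrow[d,"p_A"] \\
\mathfrak{M}_B\arrow[r,"p_B"'] & \mathfrak{M}_d
\end{tikzcd}
\]
where $\mathfrak{M}_{A,B}:=\mathfrak{M}_A\times_{\mathfrak{M}_d}\mathfrak{M}_B$ is the stack of sheaves equipped with two compatible filtrations of types $A$ and $B$, identify $R\mathcal{H}om_{\mathfrak{M}_d}(\F,\G)$ with a proper pushforward from $\mathfrak{M}_{A,B}$ of an $R\mathcal{H}om$ between two pullback objects. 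The stack $\mathfrak{M}_{A,B}$ carries a $\mathbb{G}_m$-action built from a $\mathbb{Z}$-linear combination of the two filtration cocharacters chosen so that the resulting scaling is trivial on $\mathfrak{M}_d$; the strict slope inequalities, together with the ordering $A<B$ from Subsection \ref{compadm}, are designed precisely so that this $\mathbb{G}_m$ acts on the relevant $R\mathcal{H}om$ complex with all weights in a single open half-line. Pushforward along the $\mathbb{G}_m$-direction kills such objects, and proper base change propagates the vanishing down to $R\pi_{d*}R\mathcal{H}om(\F,\G)$ on $M_d$.

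\textbf{Generation and the relative statement.} If $\mathbb{M}(d)_w$ is defined in Subsection \ref{M} as the orthogonal complement of the images of all non-trivial inductions at weight $w$, then the $\mathbb{M}_A$, including the trivial partition $A=(d,w)$ which yields $\mathbb{M}(d)_w$ itself, tautologically generate $D^b(\mathfrak{M}_d)_w$; induction on $d$ then replaces each inner $\mathbb{M}(d_i)_{w_i}$ by an already-established SOD. Every map in sight is compatible with the support morphisms $\pi_\bullet$ via the sum $\prod\operatorname{Sym}^{d_i}(S)\to\operatorname{Sym}^d(S)$, so both base change and the weight-vanishing sheafify over $M_d$, giving the relative statement as an immediate strengthening. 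The main obstacle is the weight-vanishing on $\mathfrak{M}_{A,B}$: translating the combinatorial order of Subsection \ref{compadm} into a uniform strict sign condition on all $\mathbb{G}_m$-weights appearing in the $R\mathcal{H}om$ complex, and then controlling these weights through the quasi-smooth pullbacks $q_A^{\,*}, q_B^{\,*}$ and the proper pushforwards $p_{A*}, p_{B*}$, all relative to $\operatorname{Sym}^d(S)$. The quasi-smoothness of the Porta--Sala moduli is what makes base change and the projection formula interact cleanly with the derived structure.
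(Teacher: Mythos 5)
Your outline identifies the right shape of the statement (induction functors $p_{A*}q_A^*$, semiorthogonality via a weight argument, generation by taking the complement), but the step you yourself flag as ``the main obstacle'' --- producing a uniform strict sign condition on the $\mathbb{G}_m$-weights of $R\mathcal{H}om(\mathcal{F},\mathcal{G})$ over $\mathfrak{M}_{A,B}$ --- is where essentially all of the content of the theorem lives, and it cannot be carried out globally in the way you describe. The cocharacters and torus actions needed for a grade-restriction argument exist only on local presentations of $\mathfrak{M}_d$ as quotient stacks, not on the global stack of sheaves on $S$; the only global torus is the scaling $\mathbb{C}^*$ in the inertia, which is what defines the grading by $w$ but carries no further information. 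More importantly, the single-sign condition on the weights of the $R\mathcal{H}om$ complex requires quantitative window bounds on $\langle\lambda,\mathcal{F}|_{0}\rangle$ for objects $\mathcal{F}$ of $\mathbb{M}(d_i)_{w_i}$ restricted to fixed loci, of the form $-n_\lambda/2+\langle\lambda,\delta\rangle\le\langle\lambda,\mathcal{F}|_0\rangle\le n_\lambda/2+\langle\lambda,\delta\rangle$; these categories are \emph{defined} in Subsection \ref{M} by requiring that all formal-local restrictions $\Psi_p(\mathcal{F})$ lie in window categories attached to Ext-quiver presentations, not as orthogonal complements of inductions. Your ``tautological generation'' step therefore proves a statement about a different category; identifying the complement $\mathbb{W}$ of the nontrivial inductions with $\mathbb{M}(d)_w$ is a substantive part of the proof, not a definition. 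Note also that the order on $V^d_w$ is not the naive slope order but the order of Subsection \ref{compadm}, determined by the $r$-invariants and cocharacters of the tree decompositions \eqref{godown5}.

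The paper's proof is local-to-global: $\pi_d$ is formally (and analytically) locally on $\mathrm{Sym}^d(S)$ identified with the coarse map for products of stacks $\widehat{\mathfrak{C}(d_i)}$ of points on $\mathbb{A}^2$, via Toda's Ext-quiver description \eqref{dd1}; the SOD for $\mathbb{A}^2$ is obtained from the quiver-with-potential decomposition of \cite{P} for $(\widetilde{J},\widetilde{W})$ through the Koszul equivalence (Corollary \ref{thmA}); and the global adjoints $\Delta_A=\Phi_A\beta_\chi\mathbb{D}q_{\lambda*}p_\lambda^!\mathbb{D}$ are constructed so that their key vanishing and finiteness properties can be checked after restriction to each $\widehat{\mathfrak{M}_{d,p}}$. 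The relative vanishing $R\pi_{d*}R\mathcal{H}om(\mathcal{F},\mathcal{G})=0$ is likewise deduced pointwise from the affine local models rather than from a global base-change argument. If you want to salvage your approach, you would need to either import these local window conditions (at which point you are reproducing the paper's argument) or develop a genuinely global $\Theta$-stratification framework in which the weight bounds for $\mathbb{M}(d)_w$ can be formulated and verified intrinsically on $\mathfrak{M}_d$; as written, the proposal assumes exactly what must be proved.
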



The proof of Theorem \ref{thm} is as follows. The map $\pi_d$ is analytically (and formally) locally on $\text{Sym}^d(S)$ described using the moduli stack of dimension $d$ sheaves on an open subset $U\subset \mathbb{A}^2_{\mathbb{C}}$. The categorical Hall algebra of $\mathbb{A}^2_{\mathbb{C}}$ is equivalent to the preprojective Hall algebra of $J$, the Jordan quiver, and also equivalent to the Hall algebra of the quiver $\widetilde{J}$ with three loops $x, y, z$ and potential $\widetilde{W}:=xyz-xzy$. We glue the decompositions of $\text{HA}\left(\widetilde{J}, \widetilde{W}\right)$ from \cite{P} to decompositions of $\text{HA}(S)$.

It is interesting to see if Theorem \ref{thm} holds for Hall algebras of semistable sheaves of support $\beta\in H^2(S, \mathbb{Z})\oplus H^4(S, \mathbb{Z})$ on $S$. By a result of Toda \cite{T2}, the moduli stacks \begin{equation}\label{toad}
    \pi_\beta: \mathfrak{M}_{\beta}^{\text{ss}}\to M^{\text{ss}}_{\beta}
\end{equation} admit descriptions using quivers of potential $(Q,W)$ over $M^{\text{ss}}_{\beta}$, and we can thus try to use the results in \cite{P}. In loc. cit., the semi-orthogonal decompositions for $\text{HA}(Q,W)$ depend on certain Weyl-invariant weights $\delta_{A}$ for $A=(d_i, w_i)_{i=1}^k$ a partition of $(d,w)$. 
The main difficulty in proving an analogue of Theorem \ref{thm} for $\mathfrak{M}^{\text{ss}}_\beta$ is that the local weights $\delta_A$ cannot be glued to a global $\delta_A\in \text{Pic}\left(\mathfrak{M}^{\text{ss}}_{\beta, n}\right)_{\mathbb{R}}$, so it is not clear how to define $\mathbb{M}_A\subset D^b\left(\mathfrak{M}_n\right)$.  
In the case of $\widetilde{J}$, the Weyl-invariant weights $\delta$ are multiples to each other and the categories of generators $\mathbb{M}$ do not depend on $\delta$. 

\subsection{A PBW theorem for a topological KHA}

Denote by $\text{KHA}(S)$ the Grothendieck group of $\text{HA}(S)$. 
Let \[\text{Sh}(S):=\bigoplus_{n\geq 1}\left(K_0(S^n)(z_1,\cdots, z_n)\right)^{\mathfrak{S}_n}\] be the algebra considered by Negu\c{t} \cite{N}, Zhao \cite{Z}.
Zhao in loc. cit. constructed an algebra morphism \[\Phi_S: \text{KHA}(S)\to \text{Sh}(S).\] There are analogous constructions for topological K-theory by applying Blanc's topological K-theory \cite{Bl} to the Porta--Sala monoidal category. Zhao's construction applies to construct an algebra morphism
\[\Phi_S^{\text{top}}: \text{KHA}^{\text{top}}(S)\to \text{Sh}^{\text{top}}(S).\]
Denote by $\text{KHA}'^{\text{top}}(S)$ the image of $\Phi$ and by $\text{KHA}'^{\text{top}}(S)_{d,w}$ its graded $(d,w)$-part. 
In Subsection \ref{pbwth}, we define subspaces $P(d)_w\subset \text{KHA}'^{\text{top}}(S)_{d,w}\otimes \mathbb{Q}$. Using a local argument and \cite[Proposition 5.5]{P} for $\text{HA}\left(\widetilde{J}, \widetilde{W}\right)$, we prove a PBW-type theorem for $\text{KHA}'^{\text{top}}$. Let $U^d_w$ be the set of partitions $A=(d_i, w_i)_{i=1}^k$ of $(d,w)$ 
such that \[\frac{w_1}{d_1}=\cdots=\frac{w_k}{d_k}.\] 

\begin{thm}\label{thm2}
Assume $S$ is a smooth surface with $H^1(S,\mathbb{Q})=0$.
Let $d\in \mathbb{N}$ and $w\in\mathbb{Z}$. There is a decomposition 
\begin{equation}\label{kk}
    K_0'^{\text{top}}\left(\mathbb{M}(d)_w\right)_{\mathbb{Q}}\cong
\bigoplus_{U^d_w} \bigotimes_{i=1}^k\text{Sym}^{\ell_i}\left(P(d_i)_{w_i}\right),\end{equation} where the right hand side is after all partitions in $U^d_w$ with $\ell_i$ summands $(d_i,w_i)$ for $1\leq i\leq s$ such that $d_1<\cdots<d_s$.
\end{thm}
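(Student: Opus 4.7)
The plan is to reduce the decomposition \eqref{kk} to its local analogue for the tripled Jordan quiver, namely \cite[Proposition 5.5]{P} applied to $\text{HA}(\widetilde{J},\widetilde{W})$, using the analytic/formal local model of the Chow map $\pi_d\colon\mathfrak{M}_d\to M_d=\text{Sym}^d(S)$ that was already used in the proof of Theorem \ref{thm}. Theorem \ref{thm} has already peeled off the strictly-decreasing-slope summands, so the remaining task is to decompose the equal-slope piece $\mathbb{M}(d)_w$; after applying Blanc's topological K-theory and rationalising, we want to match this with the symmetric algebra on the primitives $P(d_i)_{w_i}$.

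First I would set up the local model. Near a point $\sum_i m_i x_i\in\text{Sym}^d(S)$ the stack $\mathfrak{M}_d$ is (analytically, and formally) a product $\prod_i \mathfrak{N}_{m_i}$ of formal neighbourhoods of the origin in the moduli stack of $m_i$-dimensional nilpotent modules over $(\widetilde{J},\widetilde{W})$. The category $\mathbb{M}(d)_w$ is defined in Subsection \ref{M} by weight conditions that pull back from this quiver model, so the restriction of $\mathbb{M}(d)_w$ to such a neighbourhood matches the corresponding equal-slope category for $(\widetilde{J},\widetilde{W})$ of \cite{P}. Applying \cite[Proposition 5.5]{P} then provides the desired symmetric-algebra decomposition on each local piece, with primitives that, under the identification $\widetilde{J}\leftrightarrow\mathbb{A}^2_{\mathbb{C}}$, correspond to local restrictions of the global $P(d_i)_{w_i}$; note that for $\widetilde{J}$ the Weyl-invariant weights $\delta_A$ are all multiples of one another, so the local $\mathbb{M}$-categories and the local primitives are canonically defined and glue well.

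Second, I would globalise the decomposition by descent along an open cover of $M_d$ by such quiver neighbourhoods. The Porta--Sala monoidal structure is local over $M_d$, and Zhao's shuffle morphism $\Phi^{\text{top}}_S$ intertwines the global Hall multiplication with a shuffle product on $\text{Sh}^{\text{top}}(S)$ that is local on $\text{Sym}^d(S)$, so the globally defined subspaces $P(d)_w\subset \text{KHA}'^{\text{top}}(S)_{d,w}\otimes\mathbb{Q}$ restrict to the local primitives of \cite{P}. The hypothesis $H^1(S,\mathbb{Q})=0$ enters here: by Macdonald's formula it implies $H^1(\text{Sym}^d(S),\mathbb{Q})=0$, which ensures that in rational topological K-theory there is no first-order obstruction to lifting local generators and relations to global ones, and that the descent/Mayer--Vietoris spectral sequence computing $K_0'^{\text{top}}(\mathbb{M}(d)_w)_{\mathbb{Q}}$ from the local symmetric-algebra decompositions degenerates as expected. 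The symmetric (rather than merely associated-graded) nature of the decomposition comes from commutativity of the Hall multiplication on the top equal-slope layer, which is already visible for $(\widetilde{J},\widetilde{W})$.

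The main obstacle will be precisely this globalisation: while the semi-orthogonal decomposition of Theorem \ref{thm} holds over $M_d$ and guarantees that the slope grading is compatible with the base, identifying the sheafification of the local primitives with the globally-defined $P(d)_w$ requires control of topological K-theory of $\mathfrak{M}_d$ over $M_d$ that goes beyond the categorical statement of Theorem \ref{thm}. The hypothesis $H^1(S,\mathbb{Q})=0$ is exactly the cohomological vanishing needed to make this identification work rationally; without it, one would expect corrections coming from $H^1(\text{Sym}^d(S),\mathbb{Q})$ that would destroy the clean symmetric-algebra form of the right-hand side of \eqref{kk}.
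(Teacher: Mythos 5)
Your overall strategy (reduce to the local model for $(\widetilde{J},\widetilde{W})$ and invoke \cite[Proposition 5.5]{P}) points in the right direction, but the globalisation step as you describe it has a genuine gap. You propose to obtain \eqref{kk} by applying the local symmetric-algebra decomposition on each quiver neighbourhood and then gluing via ``descent/Mayer--Vietoris.'' Direct-sum decompositions of K-groups do not satisfy descent: there is no mechanism by which a splitting of $K_0'^{\text{top}}$ over each open set of a cover assembles into a global splitting. The paper avoids this entirely. What is globalised is not the decomposition but its \emph{ingredients}: the multiplication $m$, the coproduct-type maps $\Delta_A$ of \eqref{adjoint3}, and the compatibility between them (Theorem \ref{thm51}), whose proof is local because the restriction maps $G_0^{\text{top}}(\mathfrak{M}_d)\hookrightarrow\bigoplus_j G_0^{\text{top}}(\pi_d^{-1}(U_j))$ are injective (Subsection \ref{surfa}) and because $\Phi_S^{\text{top}}$ is built by gluing the local maps $k_d^*\iota_{d*}$. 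Once the bialgebra-type identity of Theorem \ref{thm51} is established globally, the spaces $P(d)_w$ are defined globally as intersections of kernels of the $\Delta_A$, and the decomposition follows by the purely formal argument of \cite[Proposition 5.5]{P} run on the global objects --- no gluing of decompositions is ever performed. Your proposal omits Theorem \ref{thm51} altogether, and without it the formal PBW argument has nothing to run on.

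You have also misidentified where the hypothesis $H^1(S,\mathbb{Q})=0$ enters. It is not about $H^1(\mathrm{Sym}^d(S))$ killing obstructions in a descent spectral sequence; it is needed to prove $H^{\text{BM}}_{\text{odd}}(\mathfrak{M}_d,\mathbb{Q})=0$ (Proposition \ref{pp1}), via Davison's Decomposition Theorem for $R\pi_{d*}\omega_{\mathfrak{M}_d}$, the evenness of the shifts of its summands, and the computation $H^{\cdot}_c(M_d,\mathbb{Q})\cong(H^{\cdot}_c(S,\mathbb{Q})^{\otimes d})^{\mathfrak{S}_d}$. This evenness yields the K\"unneth isomorphism $G_0^{\text{top}}(\mathfrak{M}_d\times\mathfrak{M}_e)_{\mathbb{Q}}\cong G_0^{\text{top}}(\mathfrak{M}_d)_{\mathbb{Q}}\otimes G_0^{\text{top}}(\mathfrak{M}_e)_{\mathbb{Q}}$ (Corollary \ref{ku}), which is what makes the identification $\bigotimes_i K_0'^{\text{top}}(\mathbb{M}(d_i)_{w_i})_{\mathbb{Q}}\cong K_0'^{\text{top}}(\mathbb{M}_A)_{\mathbb{Q}}$ in \eqref{kunn} valid; without it the tensor products appearing on the right-hand side of \eqref{kk} cannot even be compared with subspaces of $K_0'^{\text{top}}(\mathbb{M}(d)_w)_{\mathbb{Q}}$. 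You should add both the K\"unneth input and the multiplication--comultiplication compatibility before the formal argument can be cited.
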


Theorem \ref{thm} and Theorem \ref{thm2} imply a PBW-type decomposition for $\text{KHA}'^{\text{top}}(S)_{\mathbb{Q}}$ for $S$ as above, for example it implies that $\text{KHA}'^{\text{top}}(S)_{\mathbb{Q}}$ is generated by the elements $x_{d_1,w_1}\cdots x_{d_k, w_k}$ for $x_{d_i, w_i}\in P(d_i)_{w_i}$ with 
$\frac{w_1}{d_1}\geq \cdots\geq\frac{w_k}{d_k}$.
It would be interesting to obtain a version of Theorem \ref{thm2} for the (algebraic) KHA. 
This would require an analogue of Theorem \ref{thm51} for algebraic K-theory.  


\subsection{Previous work on Hall algebras of surfaces}

The KHA of $\mathbb{A}^2_{\mathbb{C}}$ was studied by Schiffmann--Vasserot \cite{SV} and it is the positive part of $U_q^{>}\left(\widehat{\widehat{\mathfrak{gl}_1}}\right)$. For a curve $C$, the Hall algebra of (sheaves with compact support on) $T^*C$ is the Hall algebras for Higgs bundles on $C$ studied by Sala--Schiffmann \cite{SS}, Minets \cite{M}.

Toda has studied the relation between Hall algebras and Donaldson--Thomas theory of local surfaces $\text{Tot}_S\left(\omega\right)$ in \cite{T}, \cite{T3}, \cite{T4}, \cite{T5}.

Kapranov--Vasserot \cite{KV} proved a PBW theorem for CoHAs of dimension zero sheaves on surfaces using factorization algebras.

\subsection{Structure of the paper} In Section \ref{2}, we review semi-orthogonal decompositions for categorical Hall algebras of quivers with potential. 
In Section \ref{comparison}, we compare some of these categories with categorical Hall algebras constructed from quotients of path algebras. A particular case of these quotients is the preprojective algebra of a quiver, but our results are more general and cover all the quotients that appear in local descriptions of the map \eqref{toad}. In particular, we obtain semi-orthogonal decomposition for $D^b\left(\widehat{\mathfrak{M}^{\text{ss}}_\beta}\right)$, where $\widehat{\mathfrak{M}^{\text{ss}}_\beta}$ is the formal completion of $\mathfrak{M}^{\text{ss}}_\beta$ along $\pi_{\beta}^{-1}(p)$ for any $p\in M^{\text{ss}}_\beta$, see \cite[Lemma 5.4.1, Section 7.4]{T}.
As a particular case of semi-orthogonal decompositions of preprojective algebras, we prove Theorem \ref{thm} for $\mathbb{A}^2_{\mathbb{C}}$. 
In Section \ref{5}, we prove Theorem \ref{thm}. In Section \ref{6}, we prove Theorem \ref{thm2}.

\subsection{Acknowledgements.} 
I thank Francesco Sala, Yukinobu Toda, and the referee for useful comments and suggestions. 
I thank the Institute of Advanced Studies for support during the preparation of the paper. This material is based upon work supported by the National Science Foundation under Grant No. DMS-1926686.

\subsection{Notations and conventions.} All stacks considered are over $\mathbb{C}$. All surfaces considered are smooth.

For $\X$ a quasi-smooth stack, let $D^b(\X)$ be the category of bounded complexes of coherent sheaves on $\X$ and let $\text{Ind}\,D^b(\X)$ be the Ind-completion of $D^b(\X)$ \cite{G}, see \cite[Section 3.1]{T} for a brief reminder on these topics. For a brief reminder of functoriality of the quasi-smooth stacks used in this paper, see \cite[Section 4.2]{PS}, \cite[Section 3.1]{T}. We denote by $\text{Hom}$ the morphism spaces in these categories and by $\mathcal{H}om$ the internal Hom-spaces in these categories.
For the stacks considered in the paper, there is a dualizing complex $\omega_X$ such that the dualizing functor is an equivalence
\[\mathbb{D}_\X(\F):=R\mathcal{H}om_\X(\F,\omega_\X): D^b(\X)\xrightarrow{\sim} D^b(\X)^{\text{op}},\]
see \cite[Section 2.2]{HL}.

We denote by $\text{MF}^{\text{gr}}$  and by $\text{MF}^{\text{gr}}_{\text{qcoh}}$ the categories of coherent and quasi-coherent matrix factorizations, respectively, see \cite[Section 2.2]{T} for definitions and functoriality of these categories. These categories also admit dualizing functors $\mathbb{D}$ induced by the canonical sheaf of the ambient variety and they satisfy a Thom-Sebastiani theorem, see \cite{Pr}, \cite[Section 3 and Section 5.1]{BFK}, \cite[Section 2.5]{EP}.

The categories considered are dg and we denote by $\otimes$ the product of dg categories \cite[Subsections 2.2 and 2.3]{K}.
For stacks $\X$ and $\mathcal{Y}$ considered in this paper, we have that $D^b\left(\X\times\mathcal{Y}\right)\cong D^b(\X)\otimes D^b\left(\mathcal{Y}\right)$ by \cite{BZFN}. 

We use the notation $\widehat{\X}$ for formal completions of $\X$ along a specified substack. For a morphism $f$, let $\mathbb{L}_f$ be its cotangent complex.
For a stack $\X\to \text{Spec}\,\mathbb{C}$, denote by $\mathbb{L}_\X$ its cotangent complex.

\section{Hall algebras of quivers and dimensional reduction}\label{2}

\subsection{Preliminaries}

\subsubsection{} Let $(Q,W)$ be a symmetric quiver with potential.
Let $I$ and $E$ be the sets of vertices and edges, respectively, of $Q$.
For $d\in\mathbb{N}^I$, consider the stack \[\X(d)=R(d)/G(d)\] of representations of $Q$ of dimension $d$. Fix maximal torus and Borel subgroups $T(d)\subset B(d)\subset G(d)$. We use the convention that the weights of the Lie algebra of $B(d)$ are negative; it determines a dominant chamber of weights of $G(d)$. Let $M$ be the weight space of $G(d)$, let $M_{\mathbb{R}}:=M\otimes_{\mathbb{Z}}\mathbb{R}$, and let $M^+\subset M$ and $M^+_{\mathbb{R}}\subset M_{\mathbb{R}}$ be the dominant chambers. When we want to emphasize the dimension vector, we write $M(d)$ etc. Denote by $N$ the coweight lattice of $G(d)$ and by $N_{\mathbb{R}}:=N\otimes_{\mathbb{Z}}\mathbb{R}$. Let $\langle\,,\,\rangle$ be the natural pairing between $N_{\mathbb{R}}$ and $M_{\mathbb{R}}$. 

Let $\mathfrak{S}_d$ be the Weyl group of $G(d)$. For $\chi\in M(d)^+$, let $\Gamma(\chi)$ be the representation of $G(d)$ of highest weight $\chi$.


Consider the regular function $\text{Tr}\,W:\X(d)\to\mathbb{A}^1_\mathbb{C}$.
Let $\X(d)_0:=\X(d)\times_{\mathbb{A}^1_\mathbb{C}} 0$ be its (derived) zero fiber.

Denote by $SG(d):=\text{ker}\left(\text{det}: G(d)\to\mathbb{C}^*\right)$.

\subsubsection{}\label{action}
Assume there is an extra $\mathbb{C}^*$-action on $R(d)$ which commutes with the action of $G(d)$ on $R(d)$ and such that $\text{Tr}\,W$ is of weight $2$. 
Consider the category of graded matrix factorizations $\text{MF}^{\text{gr}}(\X(d), W)$ for the regular function $\text{Tr}\,W$ \cite[Section 2.2]{T}. It is equivalent to the graded category of singularities $D^{\text{gr}}_{\text{sg}}(\X(d)_0)$.

\subsubsection{}\label{weight}
The action of $z\cdot\text{Id}\subset G(d)$ on $R(d)$ is trivial. Let $D^b(\X(d))_w$ be the category of complexes on which $z\cdot\text{Id}$ acts with weight $w$. We have an orthogonal decomposition $D^b(\X(d))=\bigoplus_{w\in\mathbb{Z}} D^b(\X(d))_w$.

More generally, for a stack $\X$ with a trivial action of a torus $T$, there is a decomposition 
\[D^b(\X)=\bigoplus_{\chi\in X(T)}D^b(\X)_\chi,\]
where $X(T)$ is the character lattice of $T$. For $\chi\in X(T)$,
denote the projection functor by 
\[\beta_\chi:D^b(\X)\to D^b(\X)_\chi.\]
There are analogous definitions and decompositions for categories of quasi-coherent sheaves, for $\text{MF}^{\text{gr}}$, and for $\text{MF}^{\text{gr}}_{\text{qcoh}}$.

\subsubsection{} 
Denote by $\beta^i_j$ the simple roots of $G(d)$ for $i\in I$ and $1\leq j\leq d_i$, where $d=(d^i)_{i\in I}\in \mathbb{N}^I$, and by
$\rho$ half the sum of positive roots of $G(d)$. We denote by $1_d:=z\cdot\text{Id}$ the diagonal cocharacter of $G(d)$. 
Consider the real weights 
\begin{align*}
    \nu_d&:=\sum_{\substack{i\in I\\ j\leq d_i}} \beta^i_j,\\
    \tau_d&:=\frac{\nu_d}{\langle 1_d, \nu_d\rangle}.
\end{align*}

\subsubsection{}\label{nm}
For a cocharacter $\lambda:\C^*\to SG(d)$, consider the maps of fixed and attracting loci
\begin{equation}\label{e}
\X(d)^\lambda \xleftarrow{q_\lambda}\X(d)^{\lambda\geq 0}\xrightarrow{p_\lambda}\X(d).\end{equation}
The map $q_\lambda$ is an affine bundle map, in particular it is smooth, and the map $p_\lambda$ is a proper map.
We say that two cocharacters $\lambda$ and $\lambda'$ are equivalent and write $\lambda\sim\lambda'$ if $\lambda$ and $\lambda'$ have the same fixed and attracting stacks as above. 
For a cocharacter $\lambda$ of $SG(d)$, define \begin{equation}\label{weight2}
    n_\lambda=-\big\langle \lambda, \det\mathbb{L}_{\X(d)}^{\lambda\leq 0}|_{\X(d)^\lambda}\big\rangle.
\end{equation}
Consider the natural inclusion $\alpha:0/G(d)\hookrightarrow \X(d)$. We use the notation $\mathcal{F}|_0:=\alpha^*(\mathcal{F})$ for the restriction of $\mathcal{F}$ at the origin.
For a cocharacter $\lambda$, we have an associated ordered partition $d_1+\cdots+d_k=d$ such that $\X(d)^\lambda\cong\times_{i=1}^k\X(d_i)$; the order is induced by the choice of $B(d)\subset G(d)$. Define the length $\ell(\lambda):=k$.

Define the polytope $\mathbb{W}$ by
\[\mathbb{W}:= \text{span}\,[0,\beta]\oplus \mathbb{R}\tau_d\subset M(d)_{\mathbb{R}},\]
where the span is after all weights $\beta$ of $R(d)$.
Let $\delta\in M_{\mathbb{R}}^{\mathfrak{S}_d}$ and define $\mathbb{N}(d; \delta)$ as the subcategory of $D^b(\X(d))$ with complexes $\mathcal{F}$ such that 
\[-\frac{n_\lambda}{2}+\langle \lambda, \delta\rangle\leq \langle \lambda, \mathcal{F}|_0\rangle \leq \frac{n_\lambda}{2}+\langle \lambda, \delta\rangle,\]
for all cocharacters $\lambda$ of $SG(d)$. 
Alternatively, it is the subcategory of $D^b(\X(d))$ generated by $\mathcal{O}_{\X(d)}\otimes \Gamma(\chi)$ for $\chi$ a dominant of weight of $G(d)$ such that
\[\chi+\rho+\delta\in \frac{1}{2}\mathbb{W}.\]

Let $\mathbb{M}(d; \delta)$ be $\text{MF}^{\text{gr}}\left(\mathbb{N}(d; \delta), W\right)$, the category of matrix factorizations with factors in $\mathbb{N}(d; \delta)$. 
For an ordered partition $(d_1,\cdots, d_k)$ of $d$, fix an antidominant cocharacter $\lambda_{d_1,\cdots,d_k}$ which induces the maps
\[\times_{i=1}^k\X(d_i)\cong\X^\lambda\xleftarrow{q_\lambda}\X^{\lambda\geq 0}\xrightarrow{p_\lambda}\X.\] 
The multiplication is induced by the functor $p_{\lambda*}q_\lambda^*$. We may drop the subscript $\lambda$ in the functors $p_*$ and $q^*$ when the cocharacter $\lambda$ is clear.

\subsubsection{}\label{compa}
Let $\underline{e}=(e_i)_{i=1}^l$ and $\dd=(d_i)_{i=1}^k$ be two partitions of $d\in \mathbb{N}^I$. We write $\ee\geq\dd$
if there exist integers \[a_0=0< a_1<\cdots<a_{k-1}\leq a_k=l\] such that for any $0\leq j\leq k-1$, we have
\[\sum_{i=a_{j}+1}^{a_{j+1}} e_i=d_{j+1}.\]
There is a similarly defined order on pairs $(d,w)\in\mathbb{N}^I\times\mathbb{Z}$.


\subsubsection{}\label{id} Let $(d_i)_{i=1}^k$ be a partition of $d$. There is an identification \[\bigoplus_{i=1}^k M(d_i)\cong M(d),\] where the simple roots $\beta^i_j$ in $M(d_1)$ correspond to the first $d_1$ simple roots $\beta^i_j$ of $d$ etc.

\subsubsection{}\label{sod4}
Let $I$ be a set. Assume there is a set $O\subset I\times I$ such that for any $i, j\in I$ we have that $(i,j)\in O$, or $(j,i)\in O$, or both $(i,j)\in O$ and $(j,i)\in O$. If $(i,j)\in O$, write $i>j$.

Let $\mathbb{T}$ be a triangulated category. We will construct semi-orthogonal decompositions
\[\mathbb{T}=\langle \mathbb{A}_i\rangle,\] 
by subcategories $\mathbb{A}_i$ with $i\in I$
such that for any $i,j\in I$ with $i>j$ and objects $\mathcal{A}_i\in\mathbb{A}_i$, $\mathcal{A}_j\in\mathbb{A}_j$, we have that:
\[\text{RHom}^\cdot_{\mathbb{T}}(\mathcal{A}_j,\mathcal{A}_i)=0.\]
If there exists a minimal element $o$ in $I$, then the inclusion $\mathbb{A}_o\hookrightarrow\mathbb{T}$ admits a right adjoint $R:\mathbb{T}\to \mathbb{A}_o$.

\subsection{Semi-orthogonal decompositions of categorical Hall algebras of quivers with potential}

In this Section, we recall the semi-orthogonal decomposition of categorical Hall algebras of quivers with potential from \cite[Theorem 1.1]{P2}. We first need to discuss some preliminary notions. We fix a dimension vector $d\in\mathbb{N}^I$. Denote by $Q^d$ the subquiver of $Q$ with vertices $i\in I$ such that $d_i>0$.
We assume that $Q=Q^d$.

\subsubsection{}
For $\chi$ a weight in $M_{\mathbb{R}}$, define its $r$-invariant to be the smallest nonnegative real number $r$ such that
$\chi\in r\mathbb{W}.$ There exists a maximal antidominant cocharacter $\lambda$ such that \[\langle \lambda, \chi\rangle=-r\langle \lambda, N^{\lambda>0}\rangle.\]
The $r$-invariant is a measurement of how close a weight is from the polytope $\frac{1}{2}\mathbb{W}$ used in the definition of the categories $\mathbb{M}(d; \delta)$.

\subsubsection{}\label{tree}

We define a tree $\mathcal{T}$ of partitions which will help us keep track of decompositions of weights in $M(d)_\mathbb{R}$ and, in particular, will specify the order in the semi-orthogonal decomposition from Theorem \ref{qp}.

The tree $\mathcal{T}=(\mathcal{I}, \mathcal{E})$ is defined as follows: the set $\mathcal{I}$ of vertices has elements corresponding to partitions $\dd$ of any dimension vector $d\in\mathbb{N}^I$ and the set $\mathcal{E}$ has edges from $\dd=(d_i)_{i=1}^k$ to $\ee$ if $\ee$ is a partition of $d_i\neq d$ for some $1\leq i\leq k$. 

A set $T\subset \mathcal{I}$ is called a \textit{tree of partitions} if it is finite and has a unique element $v\in T$ with in-degree zero.
Let $\Omega\subset T$ be the set of vertices with out-degree zero. Define the Levi group associated to $T$:
$$L(T):=\prod_{\ee\in \Omega} L(\ee).$$

\subsubsection{}

We recall the discussion from \cite[Subsection 3.1.2]{P}. 
Let $\chi\in M^+_\mathbb{R}$. Then there exists a tree $T$ of antidominant cocharacters with associated Levi group $L$, see Subsection \ref{tree}, such that there exists $\psi\in \frac{1}{2}\mathbb{W}$ with
\begin{equation}\label{godown5}
    \chi=-\sum_{j\in T} r_{j}N_j+\psi,\end{equation}
and if $i, j\in T$ are vertices such that there exists a path from $i$ to $j$, then $r_{i}> r_{j}> \frac{1}{2}$.
For $d_a$ a summand of a partition of $d$, denote by $M(d_a)\subset M(d)$ the subspace as in the decomposition from Subsection \ref{id}. 
The weight $N_j$ is defined as follows. Assume that $j$ is a partition of a dimension vector $d_a\in \mathbb{N}^I$. Let $\mathcal{W}_{j}\subset \mathcal{W}$ be the set of weights
$\beta$ of $R(d_a)$ 
with $\langle \lambda_j, \beta\rangle>0$. Define \begin{equation}\label{nj}
    N_j:=\sum_{\mathcal{W}_{j}}\beta.\end{equation}
We explain the idea behind the decomposition \eqref{godown5}, for more details see \cite[Subsection 3.1.2]{P}. If $\chi$ has $r$-invariant $r$, we locate a face of the polytope $r\mathbb{W}$ on which $\chi$ lies. Assume this face is given by the cocharacter $\lambda$ and that $\chi$ lies in the interior of this face. We then write 
\[\chi=-rN^{\lambda>0}+\psi,\] where $\psi$ is a weight with $r$-invariant $s<r$. Repeating the above procedure for the weight $\psi$ and letting $r:=r_1$ and $N_1:=N^{\lambda>0}$, we obtain the desired decomposition.

\subsubsection{}
\label{admissible}

We assume that $Q$ is connected and that $Q$ is not $Q^o$. Fix $\delta\in M^{\mathfrak{S}_d}_{\mathbb{R}}$.
Let $\chi\in M^+$ and consider the standard form \eqref{godown5}:
\[\chi+\rho+\delta=-\sum_{j\in T} r_jN_j+\psi,\]
for $\psi\in \frac{1}{2}\mathbb{W}$.
Let $L\cong \times_{i=1}^k G(d_i)$. Write $\chi=\sum_{i=1}^k \chi_i$ with $\chi_i\in M(d_i)^+$ for $1\leq i\leq k$ and consider the associated partition \[A=A_\chi:=(d_i,w_i)_{i=1}^k,\] where $w_i=\langle 1_{d_i}, \chi_i\rangle$. Let $S^d_w(\delta)$ be the set of partitions $A$ for which there exists $\chi\in M^+$ such that $A_\chi=A$. Let $T^d_w(\delta)$ be the set of partitions $A=(d_i,w_i)_{i=1}^k$ for which there exist $\chi=\sum_{i=1}^k\chi_i\in M^+$
with $\chi_i\in M(d_i)^+$ for $1\leq i\leq k$, $w_i=\langle 1_{d_i},$ such that 
\[\chi+\rho+\delta=-\frac{1}{2}N^{\lambda_{\dd}<0}+\psi\] for a weight $\psi$ in the interior of $\frac{1}{2}\mathbb{W}$.
To any such partition $A$ with cocharacter $\lambda$, associate the weight
\begin{equation}\label{weightA}
\chi_A:=-\sum_{j\in T} r_jN_j-\rho^{\lambda<0}-\delta.
\end{equation}
For $1\leq i\leq k$, consider weights $\delta_{Ai}\in M(d_i)_{\mathbb{R}}^{\mathfrak{S}_{d_i}}$ defined by
\begin{equation}\label{deltaai}
    -\chi_A=\sum_{i=1}^k \delta_{Ai}\text{ in }M(d)_{\mathbb{R}}\cong \bigoplus_{i=1}^k M(d_i)_{\mathbb{R}}.
    \end{equation}

\subsubsection{}\label{swe}
In the current and next Subsections, we explain the order in the semi-orthogonal decomposition from Theorem \ref{qp}. 

Assume first that $Q=Q^o$. Then $\delta$ is a multiple of $\tau_d$. Let $S^d_w(\delta)$ be the set of partitions $(1,w_i)_{i=1}^d$ of $(d,w)\in\mathbb{N}\times\mathbb{Z}$ with $w_1\geq\cdots\geq w_d$. 

Assume that $Q$ is a disconnected quiver and let $\delta\in M(d)_{\mathbb{R}}^{\mathfrak{S}_d}$.
If $Q$ is a disjoint union of connected quivers $Q_j$ for $j\in J$, write $d_j$ and $\delta_j$ for the corresponding dimension vector and Weyl invariant weight of $Q_j$ for $j\in J$. Let $C$ be the set of partitions $(w_j)_{j\in J}$ of $w$.
Let 
\[S^d_w(\delta):=\bigsqcup_C\left(\times_{j\in J} S^{d_j}_{w_j}(\delta_j)\right).\]

\subsubsection{}\label{compadm}
We explain how to compare partitions in $V^d_w(\delta)$. Assume first that $Q$ is connected and that $Q$ is not $Q^o$. Consider two partitions $A=(d_i, w_i)_{i=1}^k$ and $B=(e_i, v_i)_{i=1}^l$ in $S^d_w(\delta)$. Let $\chi_A$ and $\chi_B$ be two weights with associated sets $A$ and $B$:
\begin{align*}
    \chi_A+\rho+\delta&:=-\sum_{j\in T_A}r_{A,k}N_{A,k}+\psi_A,\\
    \chi_B+\rho+\delta&:=-\sum_{j\in T_B}r_{B,k}N_{B,k}+\psi_B.
\end{align*}
The set $R^d_w(\delta)\subset S^d_w(\delta)\times S^d_w(\delta)$ contains pairs $(A,B)$ for which there exists $c\geq 1$ such that $r_{A,c}>r_{B,c}$ and $r_{A,i}=r_{B,i}$ for $i<c$, or for which there exists $c\geq 1$ such that $r_{A,i}=r_{B,i}$ for $i\leq c$, $\lambda_{A,i}=\lambda_{B,i}$ for $i<c$, and $\lambda_{A,c}> \lambda_{B,c}$, or with $A=B$.

Let $O^d_w(\delta):=S^d_w(\delta)\times S^d_w(\delta)\setminus R^d_w(\delta)$.

For $Q^o$, let $R^d_w(\delta)=\{(A,A)|A\in S^d_w(\delta)\}$, and let $O^d_w(\delta):=S^d_w(\delta)\times S^d_w(\delta)\setminus R^d_w(\delta)$.

Assume $Q$ is a disconnected quiver. We continue with the notation from the previous Subsection.
Consider the set $R^{d_j}_{w_j}(\delta_j)\subset S^{d_j}_{w_j}(\delta_j)\times S^{d_j}_{w_j}(\delta_j)$ for the quiver $Q_j$, let \[R^d_w(\delta):=\bigsqcup_{C} \left(\times_{j\in J} R^{d_j}_{w_j}(\delta_j)\right)\subset S^d_w(\delta)\times S^d_w(\delta),\] and let $O^d_w(\delta):=S^d_w(\delta)\times S^d_w(\delta)\setminus R^d_w(\delta)$.

For a partition $\dd=(d_i)_{i=1}^k$ of $d\in \mathbb{N}^I$, denote by $S^{\dd}_w(\delta)$ be the subset of $S^d_w(\delta)$ with partitions $A=(d_i,w_i)_{i=1}^k$ for weights $w_i\in\mathbb{Z}$.




\subsubsection{}\label{ma}
For $A=(d_i,w_i)_{i=1}^k$ in $S^d_w(\delta)$, let $\mathbb{N}_A(\delta):=\otimes_{i=1}^k\mathbb{N}(d_i)_{w_i}$ and $\mathbb{M}_A(\delta):=\text{MF}^{\text{gr}}\left(\mathbb{N}_A(\delta),W\right)$, the category of matrix factorizations with factors in $\mathbb{N}_A(\delta)$. 

Define similarly $\mathbb{M}_A(\delta)$ for $A$ in $S^d_w(\delta)$. Let $\lambda_A:=\lambda_{d_1,\cdots,d_k}$ and $\chi_A:=(w_i)_{i=1}^k$ be the cocharacter of $SG(d)$ and the weight of $\left(\mathbb{C}^*\right)^k$ associated to $A$.

\subsubsection{}\label{boundedqp}
Let $a\in\mathbb{Z}$. Fix an ordered partition $(d_1,\cdots, d_k)$ of $d$ and let $\lambda:=\lambda_{d_1,\cdots, d_k}$ be an associated cocharacter of this partition. The set of $A=(d_i, w_i)_{i=1}^k$ in $S^d_w(\delta)$ such that $\langle \lambda, \F\rangle\geq a$ for $\F\in\mathbb{N}_A(\delta)$ is finite. The statement follows by induction on $\sum_{i=1}^k d_i$ and the fact that $\langle\lambda,\F\rangle$ is bounded above for any $A$ in $S^d_w(\delta)$.

\subsubsection{}\label{qpqp}
We now state \cite[Theorem 1.1]{P}.

\begin{thm}\label{qp}
Let $\delta\in M_{\mathbb{R}}^{\mathfrak{S}_d}$.
Consider $A=(d_i,w_i)_{i=1}^k$ in $S_w^d(\delta)$. Let $\lambda:=\lambda_{d_1,\cdots,d_k}$ be the cocharacter associated to $A$.
The functor
\[p_{\lambda*}q_\lambda^*: 
\mathbb{M}_{A}(\delta)\to \text{MF}^{\text{gr}}(\X(d),W)_w\] 
is fully faithful.
There is a semi-orthogonal decomposition
\[\text{MF}^{\text{gr}}(\X(d),W)_w=\Big\langle \mathbb{M}_{A}(\delta)\Big\rangle,\]
where the right hand side contains all $A$ in $S_w^d(\delta)$. The order of categories in the semi-orthogonal decomposition respects the order from Subsection \ref{weight}, see also Subsection \ref{sod4}.
\end{thm}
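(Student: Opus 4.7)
My plan is to first establish the analogous semi-orthogonal decomposition for the category $D^b(\X(d))_w$ without potential, and then transfer the statement to $\text{MF}^{\text{gr}}(\X(d),W)_w$ via a standard Koszul/dimensional-reduction argument. For each antidominant cocharacter $\lambda=\lambda_{d_1,\dots,d_k}$ one has the maps in \eqref{e}; the magic-window theorem of Halpern--Leistner and Ballard--Favero--Katzarkov produces, for each Weyl-invariant weight shift on the Levi, a subcategory of $D^b(\X(d))$ on which $p_{\lambda*}q_\lambda^*$ is fully faithful. The subcategory $\mathbb{N}_A(\delta)=\bigotimes_{i=1}^k\mathbb{N}(d_i;\delta_{Ai})$ is precisely tailored, via the identification \eqref{deltaai}, so that $q_\lambda^*\mathbb{N}_A(\delta)$ falls inside the relevant $\lambda$-window on $\X(d)^{\lambda\geq 0}$; this yields fully faithfulness of $p_{\lambda*}q_\lambda^*$ on $\mathbb{N}_A(\delta)$.

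For generation, I would take $\chi\in M^+$ and apply the standard form \eqref{godown5},
\[\chi+\rho+\delta=-\sum_{j\in T}r_jN_j+\psi,\qquad \psi\in\tfrac{1}{2}\mathbb{W}.\]
The largest invariant $r_1$ at the root of $T$ selects an antidominant cocharacter $\lambda_1$. A Borel--Weil / Kempf--Ness flow along $\lambda_1$ expresses $\OO_{\X(d)}\otimes\Gamma(\chi)$ as an iterated extension of objects of the form $p_{\lambda_1*}q_{\lambda_1}^*(\mathcal{E})$, where $\mathcal{E}$ lives on $\X(d)^{\lambda_1}$ and has strictly smaller standard-form tree. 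Inducting on the size of $T$, every generator $\OO_{\X(d)}\otimes\Gamma(\chi)$ eventually lands in $p_{\lambda_A*}q_{\lambda_A}^*\mathbb{N}_A(\delta)$ for the unique $A\in S^d_w(\delta)$ prescribed by the combinatorics of the tree.

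For semi-orthogonality, given $(A,B)\in O^d_w(\delta)$ with $A>B$ in the order of Subsection \ref{compadm}, I locate the first depth $c$ at which the two standard forms diverge: either $r_{A,c}>r_{B,c}$, or the $r$-values and cocharacters agree up to level $c$ but $\lambda_{A,c}>\lambda_{B,c}$. In either case, the disagreement produces a cocharacter $\mu$ such that the $\mu$-weights of every $\F\in\mathbb{M}_A(\delta)$ are strictly larger than those of every $\G\in\mathbb{M}_B(\delta)$ on the common attracting stack; the projection formula applied to $p_{\lambda_A*}$ and $p_{\lambda_B*}$, combined with the weight computation \eqref{weight2}, then forces $\Hom(\F,\G)=0$. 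Local finiteness of the indexing (ensuring that the coproduct of subcategories makes sense) is handled by Subsection \ref{boundedqp}.

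Finally, the transfer to matrix factorizations is essentially formal: $\mathbb{M}_A(\delta)=\text{MF}^{\text{gr}}(\mathbb{N}_A(\delta),W)$ by definition, and since the functors $p_{\lambda*}$ and $q_\lambda^*$ respect the regular function $\text{Tr}\,W$ (it restricts and pulls back compatibly along \eqref{e}), both fully faithfulness and the semi-orthogonality vanishings transfer from $D^b(\X(d))_w$ to $\text{MF}^{\text{gr}}(\X(d),W)_w$. The disconnected-quiver and $Q=Q^o$ cases of Subsection \ref{swe} are then deduced from the connected non-$Q^o$ case using the Thom--Sebastiani theorem and the product decomposition of $M(d)$. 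I expect the main obstacle to be the generation step: one must verify that the tree recursion terminates in an element of $S^d_w(\delta)$ (not on the boundary of $\frac{1}{2}\mathbb{W}$, nor in an infinite tree), which requires a careful analysis of how the faces of $r\mathbb{W}$ interlock as $r$ decreases, and a simultaneous check that the combinatorial order on $S^d_w(\delta)$ from Subsection \ref{compadm} agrees with the order produced by the recursion.
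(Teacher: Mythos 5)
First, a point of order: this paper does not prove Theorem \ref{qp}. Subsection \ref{qpqp} introduces it with the sentence ``We now state \cite[Theorem 1.1]{P}'', and the surrounding material (Subsections \ref{l}, \ref{boundedqp}, \ref{lrm}) only records adjoints and finiteness statements used later. So your attempt can only be compared with the proof in \cite{P}, not with anything in this text. Measured against that, your skeleton has the right ingredients: fully faithfulness of $p_{\lambda*}q_\lambda^*$ on window-type subcategories via $\lambda$-weight estimates and the adjoint $q_{\lambda*}p_\lambda^!$, generation by induction on the standard form \eqref{godown5} of $\chi+\rho+\delta$, and semi-orthogonality by comparing the data $(r_{A,i},\lambda_{A,i})$ at the first index where $A$ and $B$ diverge.

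Two steps are genuinely gapped, however. (1) The claim that the passage from $D^b(\X(d))_w$ to $\text{MF}^{\text{gr}}(\X(d),W)_w$ is ``essentially formal'' is the weakest link. Morphism complexes in $\text{MF}^{\text{gr}}$ are totalizations not determined termwise by $\text{Hom}_{D^b(\X(d))}$ of the factors; in particular, fully faithfulness of $p_{\lambda*}q_\lambda^*$ on $\mathbb{N}_A(\delta)$ does not formally imply fully faithfulness on $\mathbb{M}_A(\delta)=\text{MF}^{\text{gr}}\left(\mathbb{N}_A(\delta),W\right)$. In \cite{P} the argument is carried out directly in matrix factorizations: one computes $q_{\lambda*}p_\lambda^!$ of an object of the image inside $\text{MF}^{\text{gr}}_{\text{qcoh}}$, decomposes by $\lambda$-weights (this is exactly where the categories $\text{MF}^{\text{gr}}_{\text{above}}$ and $\text{MF}^{\text{gr}}_{\text{below}}$ of Subsection \ref{l} and the bound \eqref{weight2} enter), and identifies the weight-$\chi_A$ piece; the weight estimates agree with the potential-free case, but the conclusion is not a formal consequence of it. (2) In the generation step, a generator $\mathcal{O}_{\X(d)}\otimes\Gamma(\chi)$ does not ``eventually land in $p_{\lambda_A*}q_{\lambda_A}^*\mathbb{N}_A(\delta)$ for a unique $A$'': the Koszul-type resolutions along the attracting locus of $\lambda_1$ express it as an iterated extension of objects coming from several partitions, and the induction must be phrased as membership in the triangulated subcategory generated by the images of all relevant $\mathbb{M}_B(\delta)$ together with objects of strictly smaller $r$-invariant. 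Your closing caveat about termination of the tree recursion is precisely where \cite[Subsection 3.1.2]{P} does real work, so you are right to flag it, but the uniqueness claim as written is false. A smaller point: the theorems of Halpern--Leistner and Ballard--Favero--Katzarkov give equivalences of window categories with derived categories of GIT quotients; the fully faithfulness of the Hall product on windows is closer to \v{S}penko--Van den Bergh and is established in \cite{P} by the explicit weight estimates, not quoted from those references.
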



\subsubsection{}\label{l}
We explain how to construct the adjoint of $p_{\lambda*}q_\lambda^*$ from Theorem \ref{qp}. 

The torus $T=(\mathbb{C}^*)^k$ acts trivially on $\times_{i=1}^k\X(d_i)$, where the $i$th factor of $T$ is the group from Subsection \ref{weight}.
For an ordered tuplet $A=(d_i,w_i)_{i=1}^k$ with $k\geq 2$ in $S_w^d(\delta)$, let $\lambda=\lambda_{d_1,\cdots,d_k}$ and $\chi_A:=(w_i)_{i=1}^k$ be the cocharacter of $SG(d)$ and the weight of $T$ associated to $A$. Let \[\text{MF}^{\text{gr}}(\X(d),W)_{\text{below}}\subset \text{MF}^{\text{gr}}_{\text{qcoh}}(\X(d),W)\] be the subcategory of objects $\F$ such that $\beta_w(\F)$ is coherent for all weights $w\in\mathbb{Z}$ of $\lambda$ and it is zero for $w$ small enough. Define similarly $\text{MF}^{\text{gr}}(\X(d),W)_{\text{above}}$.
The functor
\[F:=p_{\lambda*}q_\lambda^*: \text{MF}^{\text{gr}}
\left(\times_{i=1}^k\X(d_i),W\right)_{\chi_A}\to \text{MF}^{\text{gr}}(\X(d),W)_{w}\]
is fully faithful and has a right adjoint
\begin{equation}\label{adjoint}
R:=q_{\lambda*}p_\lambda^!:\text{MF}^{\text{gr}}(\X(d),W)_{w}\to 
\text{MF}^{\text{gr}}
\left(\times_{i=1}^k\X(d_i),W\right)_{\text{above}}.
\end{equation}
The proof that the functor is fully faithful is as in \cite[Proposition 7.16]{P}. Its image lies in $\text{MF}^{\text{gr}}_{\text{above}}$ by \cite[Lemma 2.2.3]{T} for the cocharacter $\lambda^{-1}$.
We check that 
\begin{equation}\label{adjointt}
L:=\mathbb{D}_{\X^\lambda}R\mathbb{D}_\X:\text{MF}^{\text{gr}}(\X(d),W)_{w}\to 
\text{MF}^{\text{gr}}
\left(\times_{i=1}^k\X(d_i),W\right)_{\text{below}}
\end{equation}
is a left adjoint to $F$.
First, we have that 
\begin{equation}\label{df}
    \mathbb{D}_\X F=F\mathbb{D}_{\X^\lambda}.
\end{equation}
Indeed, for $\mathcal{A}$ in $\text{MF}^{\text{gr}}\left(\times_{i=1}^k\X(d_i),W\right)$, we have that
\[R\mathcal{H}om_\X\left(p_*q^*\mathcal{A},\omega_\X\right)=p_!q^!R\mathcal{H}om_{\X^\lambda}\left(\mathcal{A},\omega_{\X^\lambda}\right).\]
Let $\omega_p=\det \mathbb{L}_p[\dim p]$, $\omega_q=\det \mathbb{L}_q[\dim q]$, and let $\mathcal{C}$ be in $\text{MF}^{\text{gr}}\left(\times_{i=1}^k\X(d_i),W\right)$.
By the formulas in \cite[Subsection 2.2]{T} and using that $Q$ is symmetric, \[p_!q^!\mathcal{C}=p_*q^*(\mathcal{C}\otimes\omega_p\otimes\omega_q)=p_*q^*\mathcal{C}.\]
Next, let $\mathcal{A}$ be as above and let $\mathcal{B}$ be in $\text{MF}^{\text{gr}}(\X(d),W)$.
The statement \eqref{adjointt} to be proved is: \[R\text{Hom}_\X(\B,F\A)=R\text{Hom}_{\X^\lambda}\left(\mathbb{D}_\X R \mathbb{D}_{\X^\lambda}\B, \A\right).\] This follows from the natural isomorphisms:
\begin{multline*}
    R\text{Hom}_{\X^\lambda}(\mathbb{D}_\X R \mathbb{D}_{\X^\lambda}\B, \A)\cong
    R\text{Hom}_{\X^\lambda}(\mathbb{D}_{\X^\lambda}\A, R \mathbb{D}_{\X^\lambda}\B)\cong\\
    R\text{Hom}_\X(F\mathbb{D}_{\X^\lambda}\A, \mathbb{D}_\X \B)\cong
    R\text{Hom}_\X(\mathbb{D}_{\X}F\A, \mathbb{D}_\X \B)\cong 
    R\text{Hom}_\X(\B, F\A),
\end{multline*}
where the first and fourth isomorphisms are induced by duality, the second by adjunction of $F$ and $R$, and the third by \eqref{df}.
The functor $F$ has thus left adjoints
\begin{equation}\label{ladjoint}
\Delta_A:=\beta_{\chi_A} L:\text{MF}^{\text{gr}}(\X(d),W)_{w}\to 
\text{MF}^{\text{gr}}
\left(\times_{i=1}^k\X(d_i),W\right)_{\chi_A}.
\end{equation}
The functor $L$ in \eqref{adjointt} has image in $\text{MF}^{\text{gr}}_{\text{below}}$ and so $\Delta_A(\F)=0$ for all but finitely many sets $A$ in $S_w^d(\delta)$ by the discussion in Subsection \ref{boundedqp}.


\subsubsection{}\label{lrm}
By Theorem \ref{qp}, the inclusion $\mathbb{M}(d; \delta)\subset \text{MF}^{\text{gr}}(\X(d),W)$ has a right adjoint $R$. Then $L:=\mathbb{D}_\X R\mathbb{D}_\X$ is a left adjoint to the inclusion. 

\section{Comparison of Hall algebras via dimensional reduction}
\label{comparison}

\subsection{Koszul equivalence}\label{Koszuls}
\subsubsection{}\label{koszuls2}
Consider a quiver $\widetilde{Q}=\left(I, \widetilde{E}\right)$ where  $\widetilde{E}=E\sqcup C$. 
Let $Q=(I,E)$ and $Q'=(I,C)$.
The group $\mathbb{C}^*$ acts on representations of $\widetilde{Q}$ by scaling the linear maps corresponding to edges in $C$ with weight $2$. 
Consider a potential $\widetilde{W}$ of $\widetilde{Q}$ on which 
$\mathbb{C}^*$ acts with weight $2$. The set $C$ is called a \textit{cut} for $\left(\widetilde{Q}, \widetilde{W}\right)$ in the literature. 
Denote by $\widetilde{\X(d)}$ the moduli stack of representations of dimension $d$ for the quiver $\widetilde{Q}$ and by $\X(d)$ the analogous stack for the quiver $Q$. We consider the category of graded matrix factorizations $\text{MF}^{\text{gr}}\left(\widetilde{\X(d)}, W\right)$ with respect to the action of the group $\mathbb{C}^*$ mentioned above.
Denote the representation space of $Q'$ by 
$C(d)$, so $C(d)$ is the vector space
\[C(d):=\prod_{e\in C}\text{Hom}\left(\mathbb{C}^{d_{s(e)}}, \mathbb{C}^{d_{t(e)}}\right),\] where $s, t:C\to I$ are the source and target maps.
The space $C(d)$ has a natural action of $G(d):=\prod_{i\in I}GL(d_i)$ and thus there is a natural $G(d)$-equivariant vector bundle $\mathcal{O}_{R(d)}\otimes C(d)$. We abuse notation and also denote by $C(d)$ the corresponding vector bundle
on the stack $\X(d)$. 
Write \[\widetilde{W}=\sum_{c\in C}cW_c,\] where $W_c$ is a path of $Q$. Define the algebra $P=\mathbb{C}[Q]/I$, where $I$ is the two-sided ideal generated by $W_c$ for $c\in C$.
The potential $\widetilde{W}$ induces a section of the dual vector bundle $C(d)^{\vee}$, and thus a map $\partial:C(d)\to \mathcal{O}_{\X(d)}$.
The moduli stack of representations of $P$ of dimension $d$ is the Koszul stack
\begin{equation}\label{der}
\PP(d):=\text{Spec}\left(\mathcal{O}_{R(d)}\left[C(d)[1];\partial\right]\right)\big/ G(d).
\end{equation}
There is an equivalence of categories \cite{I}, \cite[Theorem 2.3.3]{T}, called the Koszul equivalence or dimensional reduction:
\begin{equation}\label{Koszul}
\Phi: D^b\left(\PP(d)\right)\cong \text{MF}^{\text{gr}}\left(\widetilde{\X(d)}, \widetilde{W}\right).
\end{equation}

\subsubsection{}\label{quivertilde}
Assume we are in the setting on Subsection \ref{Koszuls} and that $\widetilde{Q}$ is symmetric. 
Let $d\in\mathbb{N}^I$.
Recall that the representation space of $Q'$ is 
$C(d)$. We also denote by $C(d)$ the natural vector bundle on $\X(d)$. The dual $C(d)^{\vee}$ is naturally isomorphic to $\overline{C}(d)$, the representation space of the opposite quiver $\overline{Q'}$. 
There is a vector bundle $C(d)^{\lambda\leq 0}\cong \left(\overline{C}(d)^{\lambda\geq 0}\right)^{\vee}$ on $\X(d)^{\lambda\geq 0}$. 
For a cocharacter $\lambda$ of $SG(d)$, define
\[\PP(d)^{\lambda\geq 0}:=\text{Spec}\left(\mathcal{O}_{R(d)^{\lambda\geq 0}}\left[C(d)^{\lambda\leq 0}[1];\partial\right]\right)\big/ G(d).\]
For dimension vectors $d$ and $e$, let $\lambda:=\lambda_{d,e}$ be the cocharacter corresponding to the partition $(d,e)$ of $d+e$.
Define
$\PP(d,e):=\PP(d+e)^{\lambda\geq 0}.$
There are quasi-smooth maps $q_{d,e}$ and proper maps $p_{d,e}$, see \cite[Section 2.2]{VV} for the case of preprojective algebras:
\begin{equation}\label{e2}
   \PP(d)\times\PP(e) \xleftarrow{q_{d,e}}\PP(d,e)\xrightarrow{p_{d,e}}\PP(d+e).
\end{equation}
Recall the Koszul equivalence \eqref{Koszul}:
\begin{equation*}
\Phi: D^b\left(\PP(d)\right)\cong \text{MF}^{\text{gr}}\left(\widetilde{\X(d)}, \widetilde{W}\right).
\end{equation*}
We drop the subscripts for the maps $p$ and $q$ when they are clear. Let $\lambda$ be a cocharacter of $SG(d)$.
Define the line bundle on $\widetilde{\X(d)}^\lambda$:
\begin{equation}\label{omega}
\omega_\lambda:=\det \Big(C(d)^{\lambda\leq 0}\big/C(d)^\lambda\Big)
\end{equation}

\begin{prop}\label{comp}
Let $d$ be a dimension vector and $\lambda$ a cocharacter of $SG(d)$.
Define the functor
\[\widetilde{m'}:=\widetilde{p}_*\widetilde{q}^*(-\otimes\omega_\lambda): \mathrm{MF}^{\text{gr}}\left(\widetilde{\X(d)}^\lambda, \widetilde{W}\right)
\to 
\mathrm{MF}^{\text{gr}}\left(\widetilde{\X(d)}, \widetilde{W}\right).\]
The following diagram commutes:
\begin{equation*}
    \begin{tikzcd}
    D^b\left(\PP(d)^\lambda\right)
    \arrow[d, "\Phi"]\arrow[r,"p_*q^*"]& D^b\left(\PP(d)\right)\arrow[d,"\Phi"]\\
    \mathrm{MF}^{\text{gr}}\left(\widetilde{\X(d)}^\lambda, \widetilde{W}\right)
    \arrow[r,"\widetilde{m'}"]&
    \mathrm{MF}^{\text{gr}}\left(\widetilde{\X(d)}, \widetilde{W}\right).
    \end{tikzcd}
\end{equation*}
\end{prop}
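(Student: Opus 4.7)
The plan is to verify compatibility of the Koszul/dimensional reduction equivalence $\Phi$ with the maps $q$ and $p$ separately, tracking the appearance of the twist $\omega_\lambda$. Recall that $\Phi$ is an instance of dimensional reduction: given a smooth stack $Y$, a vector bundle $F$, and a section $s\in\Gamma(Y,F^\vee)$ with associated linear potential $W_s$ on $\mathrm{Tot}(F)$, there is an equivalence $D^b(Z(s))\cong\mathrm{MF}^{\mathrm{gr}}(\mathrm{Tot}(F),W_s)$, see \cite[Theorem 2.3.3]{T}. Three instances are needed: the pair $(\X(d),C(d))$ gives $\Phi$ itself; the pair $(\X(d)^\lambda,C(d)^\lambda)$ gives an analog $\Phi^\lambda:D^b(\PP(d)^\lambda)\cong\mathrm{MF}^{\mathrm{gr}}(\widetilde{\X(d)}^\lambda,\widetilde{W})$; and the pair $(\X(d)^{\lambda\geq 0},C(d)^{\lambda\leq 0})$ gives an equivalence for the attracting locus $\PP(d)^{\lambda\geq 0}$. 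The last of these lives naturally on $\mathrm{Tot}_{\X(d)^{\lambda\geq 0}}(C(d)^{\lambda\leq 0})$, not on $\widetilde{\X(d)}^{\lambda\geq 0}=\mathrm{Tot}_{\X(d)^{\lambda\geq 0}}(C(d)^{\lambda\geq 0})$, so some additional work is needed to bring it to the form used in $\widetilde{m'}$.

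The key technical step is to identify these two matrix-factorization categories over the attracting locus. Because $\widetilde{Q}$ is symmetric, the $\lambda$-nonfixed parts of $C(d)^{\lambda\leq 0}$ and $C(d)^{\lambda\geq 0}$ are naturally dual as $G(d)^\lambda$-equivariant bundles over $\X(d)^\lambda$. A standard Kn\"orrer/Thom--Sebastiani argument then produces an equivalence between $\mathrm{MF}^{\mathrm{gr}}(\mathrm{Tot}(C(d)^{\lambda\leq 0}),\widetilde{W})$ and $\mathrm{MF}^{\mathrm{gr}}(\widetilde{\X(d)}^{\lambda\geq 0},\widetilde{W})$ that differs from the tautological identification by a twist by the line bundle $\omega_\lambda=\det(C(d)^{\lambda\leq 0}/C(d)^\lambda)$---the determinantal shift one always encounters when comparing the Koszul complex of a bundle to that of its dual.

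With these equivalences in place I check that $\Phi$ intertwines $q^*$ with $\widetilde{q}^*$ and $p_*$ with $\widetilde{p}_*$. The first follows from the naturality of dimensional reduction under smooth base change applied to the smooth retraction $\X(d)^{\lambda\geq 0}\to\X(d)^\lambda$; the second from the functoriality of dimensional reduction along proper pushforwards, recorded for matrix factorizations in \cite[Section 2.2]{T}, applied to $\X(d)^{\lambda\geq 0}\to\X(d)$. Composing these compatibilities with the twisting step above yields the commutativity of the diagram with the bottom map $\widetilde{m'}=\widetilde{p}_*\widetilde{q}^*(-\otimes\omega_\lambda)$.

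The main obstacle is setting up the identification of the two matrix-factorization categories on the attracting locus cleanly and isolating the source of the twist $\omega_\lambda$. This reduces to a fiberwise computation over $\X(d)^\lambda$---essentially the standard fact that the Koszul complexes of a bundle $E$ and its dual $E^\vee$ agree after tensoring with $\det E$---applied here to the nonfixed part of $C(d)^{\lambda\geq 0}$, which accounts precisely for the determinant line $\omega_\lambda$.
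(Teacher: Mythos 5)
Your outline correctly isolates the two compatibilities ($q^{*}$ with $\widetilde{q}^{*}$, $p_{*}$ with $\widetilde{p}_{*}$) and correctly observes that the dimensional reduction of $\PP(d)^{\lambda\geq 0}$ naturally lives on $\mathcal{Y}:=\left(R(d)^{\lambda\geq 0}\oplus C(d)^{\lambda\leq 0}\right)/G^{\lambda\geq 0}$ rather than on $\widetilde{\X(d)}^{\lambda\geq 0}$. But the step you use to bridge that discrepancy --- a ``standard Kn\"orrer/Thom--Sebastiani'' equivalence between $\text{MF}^{\text{gr}}\left(\mathcal{Y},\widetilde{W}\right)$ and $\text{MF}^{\text{gr}}\left(\widetilde{\X(d)}^{\lambda\geq 0},\widetilde{W}\right)$ differing from ``the tautological identification'' by $\omega_\lambda$ --- is a genuine gap. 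There is no tautological identification between matrix factorization categories on total spaces of two different bundles, and Kn\"orrer periodicity does not apply: it requires the potential to contain the tautological quadratic pairing on $E\oplus E^{\vee}$, whereas $\widetilde{W}$ is linear in the fibers of each bundle separately. Indeed, applying dimensional reduction to each side shows the two categories are Koszul dual to genuinely different derived zero loci: for $\widetilde{J}$ with $\widetilde{W}=z(xy-yx)$, the first is the derived locus of block upper triangular pairs $(x,y)$ with $[x,y]=0$, while the second is the derived locus of block upper triangular pairs whose \emph{diagonal blocks} commute (with an excess copy of $\mathfrak{gl}^{\lambda<0}$). So no such equivalence should be expected, and the self-duality of the Koszul complex of $E$ versus $E^{\vee}$ is not the source of the twist.

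The paper never identifies the two models for the attracting locus. It introduces $\mathcal{W}=\left(R(d)^{\lambda\geq 0}\oplus C(d)^{\lambda}\right)/G^{\lambda\geq 0}$ and $\mathcal{Z}=\left(R(d)^{\lambda\geq 0}\oplus C(d)\right)/G^{\lambda\geq 0}$, shows via Toda's Lemmas 2.4.7 and 2.4.4 that $\Phi$ carries $q^{*}$ to the correspondence $q_{2!}q_1^{*}$ through $\mathcal{W}$ into $\mathcal{Y}$ and $p_{*}$ to the correspondence $p_{2*}p_1^{*}$ through $\mathcal{Z}$ out of $\mathcal{Y}$, and then composes the two correspondences by proper base change over $\mathcal{Y}$. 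The fiber product $\mathcal{W}\times_{\mathcal{Y}}\mathcal{Z}$ is exactly $\widetilde{\X(d)}^{\lambda\geq 0}$, and the twist arises from converting $t_{!}$ into $t_{*}$ for the regular closed embedding $t:\widetilde{\X(d)}^{\lambda\geq 0}\hookrightarrow\mathcal{Z}$, whose relative determinant is the pullback of $\det\mathbb{L}_{q_2}=\det\left(C(d)^{\lambda\leq 0}/C(d)^{\lambda}\right)=\omega_\lambda$. That base-change computation is the entire content of the proof and cannot be replaced by an equivalence of the two intermediate categories; to repair your argument you should factor each Hall-product functor through $\mathcal{W}$ and $\mathcal{Z}$ and perform this composition explicitly.
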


\begin{proof}
Define the stacks
\begin{align*}
    \mathcal{W}&=\left(R(d)^{\lambda\geq 0}\oplus C(d)^{\lambda}\right)/G^{\lambda\geq 0}\\
    \mathcal{Y}&=\left(R(d)^{\lambda\geq 0}\oplus C(d)^{\lambda\leq 0}\right)\big/G^{\lambda\geq 0}\\
    \mathcal{Z}&=\left(R(d)^{\lambda\geq 0}\oplus C(d)\right)/G^{\lambda\geq 0}.
\end{align*}
There are natural maps, see for example the settings of \cite[Lemma 2.4.4, Lemma 2.4.7]{T}:
\begin{align*}
    q_1&:\mathcal{W}\to\widetilde{\X(d)}^\lambda\\
    q_2&:\mathcal{W}\to \mathcal{Y}\\
    p_1&:\mathcal{Z}\to \mathcal{Y}\\
    p_2&:\mathcal{Z}\to \widetilde{\X(d)}.
\end{align*}
The map $q$ is quasi-smooth, so the following diagram commutes by \cite[Lemma 2.4.7]{T}:
\begin{equation*}
    \begin{tikzcd}
    D^b\left(\PP(d)^\lambda\right)
    \arrow[r,"\Phi"]\arrow[dd,"q^*"]& 
    \text{MF}^{\text{gr}}\left(\widetilde{\X(d)}^\lambda, \widetilde{W}\right)\arrow[d,"q_1^*"]\\
    & \text{MF}^{\text{gr}}\left(\mathcal{W}, \widetilde{W}\right)\arrow[d,"q_{2!}"]\\
    D^b\left(\PP(d)^{\lambda\geq 0}\right)\arrow[r,"\Phi"]& \text{MF}^{\text{gr}}\left(\mathcal{Y}, \widetilde{W}\right)
    \end{tikzcd}
\end{equation*}
The map $p$ is proper, so the following diagram commutes by \cite[Lemma 2.4.4]{T}:
\begin{equation*}
    \begin{tikzcd}
    D^b\left(\PP(d)^{\lambda\geq 0}\right)\arrow[r,"\Phi"]\arrow[dd,"p_*"]& \text{MF}^{\text{gr}}\left(\mathcal{Y}, \widetilde{W}\right)\arrow[d,"p_1^*"]\\
    & \text{MF}^{\text{gr}}\left(\mathcal{Z}, \widetilde{W}\right)\arrow[d,"p_{2*}"]\\
    D^b(\PP(d))\arrow[r,"\Phi"] & \text{MF}^{\text{gr}}\left(\widetilde{\X(d)}, \widetilde{W}\right).
    \end{tikzcd}
\end{equation*}
There are natural maps $s$, $t$ in the following cartesian diagram:
\begin{equation*}
    \begin{tikzcd}
    \widetilde{\X(d)}^{\lambda\geq 0}\arrow[d,"s"]\arrow[r,"t"]&
    \mathcal{Z}\arrow[d,"p_1"]\\
    \mathcal{W}\arrow[r,"q_2"]&
    \mathcal{Y}.
    \end{tikzcd}
\end{equation*}
By proper base change, $p_1^*q_{2!}=t_{!}s^*$. Further, $s^*q_1^*\omega_\lambda=s^*\det \mathbb{L}_{q_2}=\det \mathbb{L}_{t}$ and so $t_{!}(F)=t_{*}(\mathcal{F}\otimes
s^*q_1^*\omega_\lambda)$ by \cite[Subsection 2.2.2]{T}. 
Let $\mathcal{E}$ be a complex in $D^b(\PP(d)^\lambda)$. We compute 
\[p_*q^*(\mathcal{E})=p_{2*}p_1^*q_{2!}q_1^*(\mathcal{E})=p_{2*}t_{!}s^*q_1^*(\mathcal{E})=\widetilde{p}_*\left(\widetilde{q}^*(\mathcal{E}\otimes\omega_\lambda)\right).\]
\end{proof}

\subsection{Semi-orthogonal decompositions for preprojective-like HAs}
\subsubsection{}\label{comp3}
Let $\lambda$ be a cocharacter of $SG(d)$
and recall the definition of $\omega_\lambda$ from \eqref{omega}. We claim that:
\begin{equation}\label{three}
    2\langle \lambda, \omega_\lambda\rangle=
    \big\langle 
    \lambda, \mathbb{L}_{\widetilde{\X(d)}}^{\lambda\leq 0}
    \big|_{\widetilde{\X(d)}^\lambda}
    \big\rangle-
    \big\langle \lambda, \mathbb{L}_{\PP(d)}^{\lambda\leq 0}\big|_{\PP(d)^\lambda}\big\rangle.
\end{equation}
This follows from the following equalities in $K_0\left(BG(d)\right)$:
\begin{align*}
    \left[\mathbb{L}_{\widetilde{\X(d)}}\right]&=[R(d)]+[C(d)]-[\mathfrak{g}(d)]\\
    \left[\mathbb{L}_{\PP(d)}\right]&=[R(d)]-[C(d)]-[\mathfrak{g}(d)]\\
    [\omega_\lambda]&=
    \left[C(d)^{\lambda\leq 0}\right].
\end{align*}

\subsubsection{}\label{four}
Consider $A=(d_i,w_i)_{i=1}^k$ an ordered partition of $(d,w)$ and with associated cocharacter $\lambda=\lambda_{d_1,\cdots,d_k}$. Let $A'$ be the ordered partition $(d_i,v_i)_{i=1}^k$, where $v_i$ are the corresponding weights of $\left(\boxtimes_{i=1}^k\mathcal{F}_i\right)\otimes\omega_\lambda$. 

Let $\delta\in M_{\mathbb{R}}^{\mathfrak{S}_d}$.
The set of all such $A'$ for $A\in S^d_w(\delta)$ is denoted by $V^d_w(\delta)$. 
The set $V^d_w(\delta)$ has an order induced from the order on $S^d_w(\delta)$.
The set of all such $A'$ for $A\in T^d_w(\delta)$ is denoted by $U^d_w(\delta)$.

\subsubsection{}\label{MT}
The category $\mathbb{M}(d; \delta)$ of $D^b\left(\PP(d)\right)$ is generated by complexes $\mathcal{F}$ such that for any cocharacter $\lambda$ of $SG(d)$, we have that
\[-\frac{n_\lambda}{2}+\langle \lambda, \delta\rangle\leq \big\langle\lambda, \iota_*\mathcal{F}|_0\big\rangle\leq \frac{n_\lambda}{2}+\langle \lambda, \delta\rangle,\]
where $\iota:\PP(d)\to \X(d)$ and $n_\lambda$ is the weight for the quiver $\widetilde{Q}$, see \eqref{weight2}. 

This category corresponds to the category $\mathbb{M}(d; \delta)$ of $\text{MF}^{\text{gr}}\left(\widetilde{\X(d)}, W\right)$ by \cite[Subsection 5.3, Lemma 5.3.8]{T}. 

\subsubsection{}\label{defM}
For $A=(d_i,w_i)_{i=1}^k$ in $V^d_w(\delta)$ or $U^d_w(\delta)$, 
let $\mathbb{M}_A(\delta):=\otimes_{i=1}^k\mathbb{M}(d_i)_{w_i}$ be a subcategory of $D^b\left(\times_{i=1}^k\PP(d_i)\right)_\chi$, where $\chi:=(w_i)_{i=1}^k$ is the weight corresponding to $A$. Using the Thom-Sebastiani theorem, the Koszul equivalence induces an equivalence:
\[\Phi:\mathbb{M}_A(\delta)\cong \mathbb{M}_{A^o}(\delta)\subset \text{MF}^{\text{gr}}\left(\times_{i=1}^k\widetilde{\X(d_i)},W\right)_\chi,\]
where $A^o\in S^d_w(\delta)$ or $T^d_w(\delta)$ with $(A^o)'=A$.

\subsubsection{}
By Theorem \ref{qp}, Proposition \ref{comp}, and the discussion in Subsection \ref{four}, we have that:

\begin{cor}\label{sod}
There is a semi-orthogonal decomposition
\[D^b(\PP(d))_w=\Big\langle \mathbb{M}_{A}(\delta)\Big\rangle,\]
where the right hand side contains all sets $A$ in $V_w^d(\delta)$. The order of the summands of the semi-orthogonal decomposition is as in Subsection \ref{four}, see also Subsection \ref{sod4}. 
\end{cor}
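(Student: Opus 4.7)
The plan is to transport the semi-orthogonal decomposition of Theorem \ref{qp} to the derived category of $\PP(d)$ via the Koszul equivalence \eqref{Koszul}, with the twist by $\omega_\lambda$ appearing in Proposition \ref{comp} accounting for the reindexing from $S^d_w(\delta)$ to $V^d_w(\delta)$.

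Concretely, I would proceed as follows. Applying $\Phi$ to $D^b(\PP(d))_w$ yields $\text{MF}^{\text{gr}}(\widetilde{\X(d)}, \widetilde{W})_w$, on which Theorem \ref{qp} provides a semi-orthogonal decomposition
\[\text{MF}^{\text{gr}}(\widetilde{\X(d)}, \widetilde{W})_w = \Big\langle \mathbb{M}_B(\delta)\Big\rangle_{B \in S^d_w(\delta)}\]
with each summand embedded via $\widetilde{p}_{\lambda*}\widetilde{q}_\lambda^*$. Proposition \ref{comp} identifies
\[\Phi \circ p_{\lambda*}q_\lambda^* = \widetilde{p}_{\lambda*}\widetilde{q}_\lambda^*(-\otimes \omega_\lambda) \circ \Phi,\]
so the $\PP$-side embedding $p_{\lambda*}q_\lambda^*$ differs from the matrix-factorization-side embedding $\widetilde{p}_{\lambda*}\widetilde{q}_\lambda^*$ by the twist $(-\otimes \omega_\lambda)$ on the fixed locus. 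Under $\Phi$ (together with the Thom--Sebastiani comparison of the tensor-product categories on the two sides used in Subsection \ref{defM}), this twist carries the matrix-factorization piece $\mathbb{M}_B(\delta) = \otimes_i \mathbb{M}(d_i)_{v_i}$ to the $\PP$-side piece $\mathbb{M}_A(\delta) = \otimes_i \mathbb{M}(d_i)_{w_i}$, where $A = B'$ is the reindexed partition in $V^d_w(\delta)$ obtained by shifting each $\mathbb{C}^*_i$-weight by the character of $\omega_\lambda$, as in Subsection \ref{four}. Pulling the decomposition back through $\Phi$ therefore produces the semi-orthogonal decomposition of $D^b(\PP(d))_w$ by the categories $\mathbb{M}_A(\delta)$ as $A$ ranges over $V^d_w(\delta)$, with the order pushed forward via the bijection $B \leftrightarrow B'$ (which by the definition in Subsection \ref{four} is exactly the order of Subsection \ref{compadm}).

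Fully faithfulness of the $\PP$-side embedding and the orthogonality of the summands then transfer directly from the matrix factorization side, since $\Phi$ and the twist by $\omega_\lambda$ are both equivalences of the relevant weight components. The main bookkeeping task is verifying that the weight shift implemented by the $\omega_\lambda$-twist on $\widetilde{\X(d)}^\lambda \cong \prod_i \widetilde{\X(d_i)}$ matches the definition of $V^d_w(\delta)$; this is essentially tautological, since $V^d_w(\delta)$ was introduced in Subsection \ref{four} precisely as the image of $S^d_w(\delta)$ under that weight shift, computed from $\omega_\lambda = \det(C(d)^{\lambda\leq 0}/C(d)^\lambda)$ of \eqref{omega}. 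Apart from this compatibility check, no additional work is needed: the real content is packaged in Theorem \ref{qp} and Proposition \ref{comp}.
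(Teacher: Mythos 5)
Your proposal is correct and is exactly the paper's argument: the paper deduces Corollary \ref{sod} in one line by citing Theorem \ref{qp}, Proposition \ref{comp}, and the reindexing of Subsection \ref{four}, which is precisely the transport-through-$\Phi$-with-$\omega_\lambda$-twist argument you spell out. No discrepancies.
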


\subsubsection{}\label{ad}
We continue with the notation from the previous Subsection.
Consider an ordered tuple $A=(d_i,w_i)_{i=1}^k$ with $k\geq 2$ in $V_w^d(\delta)$. Let $\lambda=\lambda_{d_1,\cdots,d_k}$ and $\chi=(w_i)_{i=1}^k$ be the cocharacter of $SG(d)$ and the character of $T$ associated to $A$.
We explain the analogous results of Subsection \ref{l} in this setting. By the Koszul equivalence, the categories $D^b(\PP(d))$ 
have a dual functor $\mathbb{D}$. 
The fully faithful functor
\[p_{\lambda*}q_\lambda^*:  D^b\left(\times_{i=1}^k\PP(d_i)\right)_{\chi}\to D^b\left(\PP(d)\right)_w\]
has a right adjoint 
\begin{equation}\label{rightadjo}
R:=q_*p^!: D^b\left(\PP(d)\right)_w\to \text{Ind}\,D^b\left(\times_{i=1}^k\PP(d_i)\right)_{\text{above}},
\end{equation}
where the category $\text{Ind}\,D^b_{\text{above}}$ is defined as in Subsection \ref{l}. 
Indeed, using Proposition \ref{comp} and taking the adjoint of the two functors $\widetilde{m'}$ and $p_*q^*$, we have that
\[\Phi q_*p^!(\mathcal{E})=\left(\widetilde{q}_*\widetilde{p}^!\Phi(\mathcal{E})\right)\otimes\omega^{-1}_\lambda\] for $\mathcal{E}$ in $D^b\left(\PP(d)\right)$.
Thus the functor $q_*p^!$ has image in $\text{Ind}\,D^b_{\text{above}}$ by the discussion for $\text{MF}^{\text{gr}}$ in Subsection \ref{l}. 

Further, by the argument in Subsection \ref{l}, the functor $L=\mathbb{D}_{\PP^\lambda} R \mathbb{D}_{\PP}$ is a left adjoint of $p_{\lambda*}q_\lambda^*$ and has image in $\text{Ind}\,D^b_{\text{below}}$.   
Thus, for any weight $\chi$,
\[\beta_\chi L: D^b\left(\PP(d)\right)_w\to D^b\left(\times_{i=1}^k\PP(d_i)\right)_{\chi}.\]

The inclusion of $\mathbb{M}_A(\delta)$
in $D^b\left(\PP(d)\right)_w$ has a left adjoint by the argument in Subsection \ref{lrm}. Thus 
the fully faithful functor  
\[p_{\lambda*}q_\lambda^*: \mathbb{M}_A(\delta)\to D^b\left(\PP(d)\right)_w\]
has left adjoints 
\begin{equation}\label{adjoint2}
 \Delta_{A}: D^b\left(\PP(d)\right)_w\to \mathbb{M}_A(\delta).
\end{equation}

\subsubsection{}\label{boundedqp2}
Let $\F$ be in $D^b\left(\PP(d)\right)_w$.
By the discussion in Subsection \ref{boundedqp} and the fact that $L$ has image in $\text{Ind}\,D^b_{\text{below}}$, $\Delta_A(\F)=0$ for all but finitely many $A$ in $V^d_w(\delta)$.

\subsubsection{}

Let $J$ be the Jordan quiver, let $\widetilde{J}$ be the quiver with three loops $x, y, z$, and let $\widetilde{W}:=xyz-xzy$. The stacks $\mathfrak{P}(d)$ from Subsection \ref{koszuls2} recover the 
the moduli stacks of points on $\mathbb{A}^2_\mathbb{C}$ which we denote by $\mathfrak{C}_d$ for $d\in\mathbb{N}$. Denote its coarse space by $C_d$.

The categories $\mathbb{M}_A(\delta)$ do not depend on the weight $\delta\in M_{\mathbb{R}}^{\mathfrak{S}_d}$, and we will assume that $\delta=0$ and drop it from the notation. 
We want to describe in more detail the sets $V^d_w$ and $U^d_w$. Let $\chi$ be a dominant weight in $M(d)$ and write 
\[\chi+\rho=-\sum_{j\in T} r_jN_j+\psi\] as in \eqref{godown5}. Let $\lambda$ be the corresponding antidominant cocharacter with associated partition $(d_1,\cdots, d_k)$. Consider the weight 
\[\theta_\chi:=-\sum_{j\in T} r_jN_j-\rho^{\lambda<0}+\omega_\lambda.\]
Define $\mathfrak{g}_j$ analogously to $N_j$ for the adjoint representation of $GL(d)$ for $j\in T$. 
Then
\[\theta_A:=-\sum_{j\in T}\left(3r_j-\frac{3}{2}\right)\mathfrak{g}_j+c\tau_d=\sum_{i=1}^k w_i\tau_{d_i}\] is a dominant weight and so $\frac{w_1}{d_1}>\cdots >\frac{w_k}{d_k}$. Conversely, 
consider a partition $(d_i, w_i)_{i=1}^k$ with $\frac{w_1}{d_1}>\cdots >\frac{w_k}{d_k}$ and let $\psi_A:=\sum_{i=1}^k w_i\tau_{d_i}$. Then $\psi_A$ is a dominant weight and $\psi_A$ is a linear combination of $\mathfrak{g}_j$ for a tree $T$: 
\begin{equation}\label{chiAA}
    \psi_A=\sum_{i=1}^s w_i\tau_{d_i}=-\sum_{j\in T}\left(3r_j-\frac{3}{2}\right)\mathfrak{g}_j+c\tau_d.
\end{equation}
Then $r_i>r_j$ for $i, j\in T$ such that there exists a path from $i$ to $j$, otherwise $\psi_A$ is not dominant.

Thus the set $V^d_w$ contains partitions $(d_i, w_i)_{i=1}^k$ with $\frac{w_1}{d_1}>\cdots >\frac{w_k}{d_k}$. 
By the above discussion and Corollary \ref{sod}, we see that Theorem \ref{thm} holds for $\mathbb{A}^2_{\mathbb{C}}$: 

\begin{cor}\label{thmA}
There is a semi-orthogonal decomposition
\begin{equation}\label{sod222}
    D^b\left(\mathfrak{C}_d\right)_w=\Big\langle 
\mathbb{M}_A\Big\rangle,\end{equation}
where the right hand side contains all partitions $A\in V^d_w$.
The semi-orthogonal decomposition holds over $C_d$ in the following sense. Let $A<B$ be two partitions and let $\mathcal{F}\in \mathbb{M}_A$ and $\mathcal{G}\in \mathbb{M}_B$. Then \begin{equation}\label{zeroo}
    R\pi_{d*}\left(R\mathcal{H}om_{\mathfrak{C}_d}(\mathcal{F}, \mathcal{G})\right)=0.\end{equation}
\end{cor}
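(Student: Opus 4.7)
The plan is to deduce the corollary directly from Corollary \ref{sod} via the Koszul equivalence and then to upgrade the global semi-orthogonality to the local statement over $C_d$ using the affineness of $C_d$. First I identify the setup: taking the cut $C=\{z\}$ of $\left(\widetilde{J},\widetilde{W}\right)$, we have $\partial_z\widetilde{W}=xy-yx$, so the quotient algebra $P$ of Subsection \ref{Koszuls} is $\mathbb{C}[x,y]$, and hence $\PP(d)=\mathfrak{C}_d$. Proposition \ref{comp} together with the definition in Subsection \ref{defM} transports the subcategories $\mathbb{M}_A$ and the convolution functors $p_{\lambda*}q^*_\lambda$ across the Koszul equivalence \eqref{Koszul}. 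Since in the $\widetilde{J}$ case the categories $\mathbb{M}_A$ do not depend on $\delta$, I may take $\delta=0$ and apply Corollary \ref{sod}, which yields \eqref{sod222}. The identification of $V^d_w$ with partitions satisfying $\frac{w_1}{d_1}>\cdots>\frac{w_k}{d_k}$ has already been carried out in the discussion immediately preceding the statement, via \eqref{chiAA}.

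For the assertion that the semi-orthogonal decomposition holds over $C_d$, the key input is that $C_d=\text{Sym}^d\!\left(\mathbb{A}^2_{\mathbb{C}}\right)$ is affine. For $\mathcal{F}\in\mathbb{M}_A$ and $\mathcal{G}\in\mathbb{M}_B$ with $A<B$, the semi-orthogonality established in the first step gives
\[R\Gamma\!\left(C_d,\,R\pi_{d*}R\mathcal{H}om_{\mathfrak{C}_d}(\mathcal{F},\mathcal{G})\right)=R\text{Hom}_{\mathfrak{C}_d}(\mathcal{F},\mathcal{G})=0.\]
Because $C_d$ is affine, the derived global sections functor $R\Gamma(C_d,-)$ is conservative on $D\!\left(\text{QCoh}(C_d)\right)$: indeed it realizes an equivalence with $D\!\left(\Gamma(C_d,\mathcal{O}_{C_d})\text{-Mod}\right)$. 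Hence $R\pi_{d*}R\mathcal{H}om_{\mathfrak{C}_d}(\mathcal{F},\mathcal{G})=0$, which is the desired vanishing \eqref{zeroo}.

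The argument is essentially a repackaging of the already established Corollary \ref{sod}, so there is no serious obstacle. The mildly delicate points are (a) verifying that every element of $V^d_w$ arises as an $A'$ for some $A\in S^d_w(0)$ corresponding to a dominant weight via \eqref{godown5}, for which the explicit calculation around \eqref{chiAA} is precisely what is required, and (b) checking that $R\pi_{d*}R\mathcal{H}om_{\mathfrak{C}_d}(\mathcal{F},\mathcal{G})$ actually lies in $D\!\left(\text{QCoh}(C_d)\right)$ so that the affineness argument applies; this follows from the existence of the dualizing functor on $D^b(\mathfrak{C}_d)$ recalled in the conventions section, which makes $R\mathcal{H}om$ well-behaved on the quasi-smooth stack $\mathfrak{C}_d$.
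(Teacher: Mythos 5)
Your proposal is correct and follows essentially the same route as the paper: identify $\mathfrak{C}_d$ with $\PP(d)$ for $\left(\widetilde{J},\widetilde{W}\right)$ via the cut $\{z\}$, invoke the $\delta$-independence of the categories and Corollary \ref{sod} (through Proposition \ref{comp}), use the computation around \eqref{chiAA} to identify $V^d_w$ with strictly-decreasing-slope partitions, and deduce \eqref{zeroo} from semi-orthogonality plus the affineness of $C_d$. The only difference is that you spell out the conservativity of $R\Gamma(C_d,-)$ on $D(\mathrm{QCoh}(C_d))$, which the paper leaves as a one-line remark.
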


Note that the statement \eqref{zeroo} holds from the semi-orthogonality property because $C_d$ is affine.
Similarly, the set $U^d_w$ contains partitions $A=(d_i,w_i)_{i=1}^k$ such that $\psi_A$ from \eqref{chiAA} is a multiple of $\tau_d$, which implies that
\begin{equation}\label{equ}
    \frac{w_1}{d_1}=\cdots=\frac{w_k}{d_k}.\end{equation}
Conversely, all the partitions $A$ satisfying \eqref{equ} are in $U^d_w$.

\section{Hall algebras of surfaces}\label{5}

\subsection{Moduli of sheaves on surfaces via quivers}

\subsubsection{}\label{bbb}
Let $S$ be a smooth surface and let $d\in\mathbb{N}$.
Consider the coarse moduli space morphism \[\pi_d:\mathfrak{M}_{d}
\to M_d:=\text{Sym}^d(S).\]
Choose $k$ distinct points $p_1,\cdots, p_k$ with multiplicities $d_1,\cdots,d_k$ and let $p\in \text{Sym}^d(S)$ be the corresponding point.
Denote by $\widehat{\mathfrak{M}_{d, p}}$ the formal completion of $\mathfrak{M}_d$ along $\pi_d^{-1}(p)$. Note that 
\begin{equation}\label{isoHalP}
\widehat{\mathfrak{M}_{d, p}}\cong \mathfrak{M}_{d}\times_{M_d} \widehat{M_{d, p}},\end{equation} where $\widehat{M_{d, p}}$ is the completion of $M_d$ at $p$, see \cite[Section 2.1]{HLPr}.

Denote by $J'$ the doubled quiver of $J$. Consider the quiver $Q=\bigsqcup_{i=1}^k J'$ with vertex set $I=\{1,\cdots,k\}$ and consider the dimension vector $d=(d_i)_{i=1}^k\in\mathbb{N}^k$. The quiver $Q$ is the Ext quiver of the polystable sheaf $\bigoplus_{i=1}^k \mathcal{O}_{p_i}^{\oplus d_i}$.
Consider the (Koszul) stack
\begin{equation}\label{PP}
\widehat{\mathfrak{P}(\dd)}:=\times_{i=1}^k \widehat{\mathfrak{C}(d_i)},
\end{equation}
where the completions for the spaces on the right hand side are at zero.
Let $\widehat{P(\dd)}$ be the coarse space of $\widehat{\PP(\dd)}^{\text{cl}}$. The corresponding coarse space map is $\pi_{\dd}:=\times_{i=1}^k \pi_{d_i}$.
By \cite[Lemma 5.4.1, Section 7.4]{T}, we have canonical isomorphisms $\nu$ which extend to isomorphisms over analytic neighborhoods of $p\in M_d$ and $0\in P(\dd)$:
\begin{equation}\label{dd1}
\begin{tikzcd}
\mathfrak{M}_{d} \arrow[d, "\pi_d"] & \widehat{\mathfrak{M}_{d, p}} \arrow[l] \arrow[d, "\pi_d"] \arrow[r, "\sim", "\nu"']& \widehat{\PP(\dd)}\arrow[d, "\pi_{\dd}"] \\
M_d& \widehat{M_{d, p}} \arrow[l] \arrow[r, "\sim"]& \widehat{P(\dd)}.
\end{tikzcd}
\end{equation}


\subsubsection{}\label{oo} For $\underline{d}=(d_1,\cdots, d_k)$ a partition of $d$, let $M_{\underline{d}}\subset M_d$ be the locus of points $p\in M_d$ corresponding to (not necessarily distinct) points $p_1,\cdots, p_k$ with multiplicities $d_1,\cdots, d_k$.  
By a d\'{e}vissage argument, the category $D^b\left(\mathfrak{M}_d\right)$ is generated by locally free sheaves on $\pi_d^{-1}\left(M_{\underline{d}}\right)$ for all partitions $\underline{d}$ of $d$. 

For $U\subset M_d$ an open analytic subset, denote by $D^b_o\left(\pi_d^{-1}(U)\right)$ the category of coherent (analytic) sheaves generated by restrictions of coherent sheaves on $D^b\left(\mathfrak{M}_d\right)$. For $U$ a small contractible open subset of $M_d$, the category $D^b_o\left(\pi_d^{-1}(U)\right)$ is generated by sheaves $\mathcal{O}_
{\pi_n^{-1}\left(U\cap M_{\underline{d}}\right)}\otimes \Gamma(\chi)$ for $\chi$ a weight of $GL(d)$ and $\underline{d}$ a partition of $d$.

\subsubsection{}\label{ad2} Assume $S=\mathbb{A}^2_{\mathbb{C}}$ and let $p\in M_d$ as above. We obtain a commutative diagram:
\begin{equation}
    \begin{tikzcd}\label{dd2}
    \mathfrak{P}(\dd):=\times_{i=1}^k \mathfrak{C}(d_i)\arrow[d, "\pi_{\dd}"]\arrow[r]& \mathfrak{C}(d)\arrow[d, "\pi_d"]\\
    P(\dd):=\times_{i=1}^k C(d_i)\arrow[r]& C(d).
    \end{tikzcd}
\end{equation}
The thick subcategory of $D^b\left(\mathfrak{P}(\dd)\right)$ generated by restrictions of coherent sheaves on $\mathfrak{C}(d)$ is $D^b\left(\mathfrak{P}(\dd)\right)$. 

For $p\in M_d$, define $D^b_o\left(
\widehat{\mathfrak{M}_{d,p}}\right)$ as the thick subcategory of $D^b\left(
\widehat{\mathfrak{M}_{d,p}}
\right)$ generated by restrictions of sheaves $\mathcal{O}_
{\pi_d^{-1}(M_{\underline{d}})}\otimes \Gamma(\chi)$ for $\chi$ a dominant weight of $GL(d)$ and $\underline{d}$ a partition of $d$. 
The category $D^b_o\left(
\widehat{\mathfrak{M}_{d,p}}\right)$ depends only on the Ext quiver of $p$. Indeed, this follows from the isomorphisms in \eqref{dd1} and \eqref{dd2} for analytic open neighborhoods of the point $0$ in $P(\dd)$, the point $p$ in $C(d)$, and $p$ in $M_d$. 

The semi-orthogonal decomposition \eqref{sod222} induces a semi-orthogonal decomposition: \begin{equation}\label{sod333}
    D^b_o\left(\widehat{\mathfrak{P}(\dd)}\right)=\Big\langle \widehat{\mathbb{M}_{A,p}}\Big\rangle,
    \end{equation}
where we denote by $\widehat{\mathbb{M}_{A,p}}$ the subcategory of $D^b_o\left( \widehat{\mathfrak{P}(\dd)}\right)$ generated by restrictions of sheaves in $\mathbb{M}_A$. Further, the discussion in Subsection \ref{ad} applies to the categories from \eqref{sod333} and provides adjoint functors $\Delta_A$ to the multiplication functors for $A\in V^d_w$.

\subsubsection{} 
The Hall products for the surface $S$ and the preprojective algebra of the quiver $Q$ are compatible. Let $b, c\in \mathbb{N}$, let $a=b+c$, and let $p\in M_b$, $q\in M_c$, and $r:=p\oplus q\in M_a$. 
Let $Q$ be the Ext-quiver (for $S$) of $a$ and let $d$ and $e$ be the dimension vector of $Q$ corresponding to $b$ and $c$.
Denote by $\widehat{\PP}$ the derived stack defined in \eqref{PP} for $a$. 
Then there is a commutative diagram
\begin{equation*}
\begin{tikzcd}
\mathfrak{M}_b
\times
\mathfrak{M}_c & \widehat{\mathfrak{M}_{b, p}}\times
\widehat{\mathfrak{M}_{c,q}} 
\arrow[l] \arrow[r, "\sim", "\nu\times\nu"'] & 
\widehat{\PP(d)}\times \widehat{\PP(e)}\\
\mathfrak{M}_{b,c} \arrow[u, "q_{b,c}"] \arrow[d, "p_{b,c}"'] & \widehat{\mathfrak{M}_{b,c, r}}
\arrow[u, "q_{b,c}"] \arrow[d, "p_{b,c}"'] \arrow[l] \arrow[r, "\sim", "\nu"']& \widehat{\PP(d,e)} \arrow[u, "q_{d,e}"] \arrow[d, "p_{d,e}"'] \\
\mathfrak{M}_a & \widehat{\mathfrak{M}_{a,r}} \arrow[l] \arrow[r, "\sim", "\nu"']& \widehat{\PP(d+e)}.
\end{tikzcd}
\end{equation*}

\subsubsection{}
For any coherent sheaf $\F$ on $S$, there is an inclusion $\mathbb{C}^*\hookrightarrow \text{Aut}\left(\F\right)$
given by scaling. 
Thus the inertia stack of $\MM_d$ contains a natural copy of $\mathbb{C}^*$. Denote by $D^b\left(\MM_d\right)_w$ the category of complexes on $\MM_d$ on which $\mathbb{C}^*$ acts with weight $w$.

\subsubsection{}\label{M}
For $p\in M_d$, consider the stack $\widehat{\PP(\dd)}$ defined in \eqref{PP}. Recall the natural isomorphism \[\nu: \widehat{\mathfrak{M}_{d, p}}\cong \widehat{\PP(\dd)}.\]
There is thus an equivalence of categories:
\[\nu^{-1}: D^b\left(\widehat{\PP(\dd)}\right)\cong D^b\left(\widehat{\mathfrak{M}_{d,p}}\right).\]
We have that $\widehat{\mathbb{M}(d)_p}\cong\nu^{-1}\left(\widehat{\mathbb{M}(d)}\right)$.
Consider the restriction functor: \[\Psi_p: D^b\left(\mathfrak{M}_d\right)\to D^b\left(\widehat{\mathfrak{M}_{d,p}}\right).\] 
Define $\mathbb{M}(d)$ as the subcategory of $D^b\left(\mathfrak{M}_d\right)$ generated by complexes $\F$ such that for any point $p$ in $M_d$, we have that
\[\Psi_p(\F)\in \widehat{\mathbb{M}(d)_p}\subset D^b\left(\widehat{\mathfrak{M}_{d,p}}\right).\]
For $A=(d_i,w_i)_{i=1}^k\in V^d_w$ or $U^d_w$, define $\mathbb{M}_A:=\otimes_{i=1}^k \mathbb{M}(d_i)_{w_i}$.



\subsection{Proof of Theorem \ref{thm}}
We use induction on $d$. The stacks $\MM_d$ have dualizing functors $\mathbb{D}$ \cite[Subsection 2.2.1]{HL}. If $d=1$, then $D^b\left(\mathfrak{M}_1\right)_w\cong D^b(S)\cong \mathbb{M}(1)_w$ for any $w\in\mathbb{Z}$.

Let $\F$ be a complex in $D^b\left(\MM_d\right)$, let $A=(d_i, w_i)_{i=1}^k$ in $V^d_w$ with $k\geq 2$, and let $\lambda$ be its associated cocharacter.
We have that 
\[\mathbb{D} q_{\lambda*}p_\lambda^!\mathbb{D}\F\in \text{Ind}\,D^b\left(\times_{i=1}^k\MM_{d_i}\right)_{\text{below}}.\] 
Indeed, from the local statement \eqref{rightadjo} and \eqref{dd1}, the statement is true over a small neighborhood $U$ of $p\in M_d$; by embedding $S$ in a projective surface, we see that $M_d$ can be covered by a finite number of such open sets $U$. 
Thus, from Subsection \ref{boundedqp2}, we have that $\Delta_A(\F)=0$ for all but finitely $A$ in $V^d_w$. 



Consider $A=(d_i,w_i)_{i=1}^k$ and $B=(e_i,v_i)_{i=1}^s$ in $V^d_w$ such that $(d,w)<A<B$. 
Let $\lambda$ and $\mu$ be the associated cocharacters for $A$ and $B$ and let $\chi=(w_i)_{i=1}^k$ be the weight corresponding to $A$. 
Using the induction hypothesis and the argument in Subsection \ref{lrm}, the inclusion of $\mathbb{M}_A$ in $D^b\left(\times_{i=1}^k\mathfrak{M}_{d_i}\right)_\chi$ has a left adjoint
\[\Phi_A: D^b\left(\times_{i=1}^k\mathfrak{M}_{d_i}\right)_\chi\to \mathbb{M}_A.\]
The functor 
\[p_{\lambda*}q_\lambda^*: \mathbb{M}_A\to D^b\left(\MM_d\right)_w\]
is fully faithful and has a left adjoint 
\begin{equation}\label{adjoint3}
\Delta_A:= \Phi_{A}\beta_\chi \mathbb{D} q_{\lambda*}p_\lambda^!\mathbb{D}: D^b\left(\MM_d\right)_w\to \mathbb{M}_{A}.
\end{equation}
Both statements 
follow from the analogous statements for a formal completion $\widehat{\mathfrak{M}_{d,p}}$ for $p\in M_d$, see the discussion in the Subsections \ref{ad} and \ref{ad2}.


Next, let $\F_i\in \mathbb{M}(d_i)_{w_i}$ for $1\leq i\leq k$. We claim that: 
\[\Delta_B p_{\lambda*}q_\lambda^*\left(\boxtimes_{i=1}^k \F_i\right)=0.\]
Once again, this follows from the analogous statement for a formal completion $\widehat{\mathfrak{M}_{d,p}}$ for $p\in M_d$, see Subsections \ref{ad} and \ref{ad2}.

Let $\mathbb{W}$ be the left complement of the categories $\mathbb{M}_A$ for all sets $A=(d_i,w_i)_{i=1}^k$ in $V^d_w$ with $k\geq 2$. There is a semi-orthogonal decomposition 
\begin{equation}\label{ddss}
    D^b\left(\MM_d\right)_w=\Big\langle \mathbb{M}_A, \mathbb{W}\Big\rangle,
    \end{equation}
with summands corresponding to all $A$ in $V_w^d$ with $k\geq 2$.
For $p\in M_d$, define $\widehat{\mathbb{W}_p}$ as the subcategory of $D^b_o\left(\widehat{\MM_{d,p}}\right)_w$ generated by restrictions of sheaves in $\mathbb{W}$.
The restriction of the semi-orthogonal decomposition \eqref{ddss} implies an analogous semi-orthogonal decomposition 
\[D^b_o\left(\widehat{\MM_{d,p}}\right)_w=
\Big\langle \widehat{\mathbb{M}_A},  \widehat{\mathbb{W}_p}\Big\rangle\] 
for $p\in M_d$. Indeed, consider partitions $(d,w)<A<B$ and let $\mathcal{F}\in \mathbb{M}_A$ and $\mathcal{G}\in \mathbb{M}_B$. Then \[R\pi_{d*}\left(R\mathcal{H}om_{\mathfrak{M}_d}(\mathcal{F}, \mathcal{G})\right)=0\] from the semi-orthogonal decomposition \eqref{sod333} and Corollary \ref{thmA} applied for all points $p\in M_d$.
Next, the category $\mathbb{W}$ contains sheaves $\mathcal{F}$ such that $\Delta_A(\mathcal{F})=0$ for all partitions $A=(d_i,w_i)_{i=1}^k\in V^d_w$  with $k\geq 2$. The restriction of the functor $\Delta_A$ to $\widehat{\mathfrak{M}_{d,p}}$ is the analogously defined functor \[\Delta_A: \widehat{\mathfrak{M}_{d,p}}\to \widehat{\mathbb{M}_{A,p}}.\]
This construction further shows that $\widehat{\mathbb{W}_p}=\left(\widehat{\mathbb{M}(d)_p}\right)_w$ and thus that $\mathbb{W}=\mathbb{M}(d)_w$, and it also shows that \[R\pi_{d*}\left(R\mathcal{H}om_{\mathfrak{M}_d}(\mathcal{F}, \mathcal{G})\right)=0\]
for $\mathcal{F}\in \mathbb{M}(d)_w$ and $\mathcal{G}\in \mathbb{M}_A$.

\section{PBW theorem for surfaces}\label{6}

The proof of Theorem \ref{thm2} follows closely the proof of \cite[Proposition 5.5]{P}. We will explain how to modify the proof in loc. cit. to obtain the proof of Theorem \ref{thm2}. All the K-theoretic spaces in this Section are over the rational numbers $\mathbb{Q}$.

\subsection{The Shuffle algebra}
\subsubsection{}
Let $\text{Sh}(S)$ be the shuffle algebra of the surface $S$ considered by Negu\c{t} \cite{N}, Zhao \cite[Section 5]{Z}. Its underlying $\mathbb{N}$-graded vector space is \[\text{Sh}(S):=\bigoplus_{n\geq 1}\left(K_0(S^n)(z_1,\cdots, z_n)\right)^{\mathfrak{S}_n}.\]
Fix $n, m\geq 1$.
For $1\leq i, j\leq n+m$, let $\Delta_{ij}\subset S^{n+m}$ be the inclusion of the locus with equal points on the $i$th and $j$th copies of $S$. Define 
\[\zeta_{ij}(z):=1+\frac{z\cdot [ \mathcal{O}_{\Delta_{ij}}]}{(1-z)(1-z[\omega_S])}\in K_0\left(S^{n+m}\right)(z),\]
see \cite[Equation 3.6, Proposition 5.24]{N}.
Let $f$ and $g$ be elements of $\text{Sh}(S)$ of degrees $n$ and $m$, respectively.
The product on the shuffle algebra is defined by
\[(f*g)(z_1,\cdots,z_{n+m}):=\text{Sym}\left(f(z_1,\cdots, z_n)g(z_{n+1},\cdots,z_{n+m})\prod_{\substack{1\leq i\leq n\\ n+1\leq j\leq n+m}}\zeta_{ij}\left(\frac{z_i}{z_j}\right)\right),\] where the right hand side is the symmetrization after all cosets in $\mathfrak{S}_{n+m}/\mathfrak{S}_n\times\mathfrak{S}_m$.
There is an algebra morphism 
\[\Phi_S: \text{KHA}(S)\to \text{Sh}(S)\]  defined in \cite[Section 5.2]{Z}.
Let $S=\mathbb{A}^2_\mathbb{C}$ and consider the map \[\iota_d: \mathfrak{C}(d)\to \mathfrak{gl}(d)^2/GL(d).\] 
In \cite[Subsection 5.1.2]{P}, we constructed algebra morphisms by setting the potential to zero, so, for example, a morphism $\text{KHA}\left(\widetilde{J}, \widetilde{W}\right)\to \text{KHA}\left(\widetilde{J}, 0\right)$. Using dimensional reduction, the morphism is the same as the pushforward (up to an equivariant factor, see Proposition \ref{comp}):
\begin{equation}\label{iota}
    \iota_*: \bigoplus_{d\in\mathbb{N}} G_0\left(\mathfrak{C}(d)\right)\to 
\bigoplus_{d\in\mathbb{N}} K_0\left(\mathfrak{gl}(d)^2/GL(d)\right).
\end{equation}
Write $\mathfrak{C}(d)=C(d)/GL(d)$. 
Let $k_d: D(d)\to C(d)$ be the locus of diagonal matrices and let $\mathfrak{D}(d)=D(d)/T(d)$. Consider the map
\begin{multline}\label{k}
    K_0\left(\mathfrak{gl}(d)^2/GL(d)\right)\cong K_0\left(\mathfrak{gl}(d)^2/T(d)\right)^{\mathfrak{S}_d}\xrightarrow{k_d^*} K_0\left(\mathfrak{D}(d)\right)^{\mathfrak{S}_d}\to\\ \left(K_0(S^d)(z_1,\cdots, z_d)\right)^{\mathfrak{S}_d}
    .\end{multline}
    Write $k^*:=\bigoplus_{d\in \mathbb{N}}k_d^*$. Then $\Phi_{\mathbb{A}^2_\mathbb{C}}=k^*\iota_*$. 
    
\subsubsection{} There is a K-theoretic Hall algebra defined using topological K-theory by applying the Blanc topological K-theory functor $K^{\text{top}}$ \cite{Bl} to the Porta--Sala monoidal category \cite{PS}. For a quotient stack $\X=X/G$ with $X$ a possibly singular variety and $G$ a reductive group, $K^{\text{top}}_{\cdot}\left(\mathfrak{X}\right):=K^{\text{top}}_{\cdot}\left(\text{Perf}(\mathfrak{X})\right)$ is the Atiyah-Segal equivariant K-theory of $\mathfrak{X}$, while $G^{\text{top}}_{\cdot}\left(\mathfrak{X}\right):=K^{\text{top}}_{\cdot}\left(D^b\text{Coh}(\mathfrak{X})\right)$ is the 
Borel-Moore equivariant K-homology (also called Spanier-Whitehead) of $\mathfrak{X}$, see \cite[Theorem 3.9, Remark 0.1]{HLP}. 

We denote by $\text{KHA}^{\text{top}}(S):=\bigoplus_{d\in\mathbb{N}} G_0^{\text{top}}\left(\mathfrak{M}_d\right)$. 

\subsubsection{}\label{surfanew}
There is a topological Chern character \[\text{ch}^{\text{top}}: G_0^{\text{top}}\left(\mathfrak{M}_d\right)\to \prod_{i\in \mathbb{Z}} H^{\text{BM}}_{2i}(\mathfrak{M}_d, \mathbb{Q})\] factoring the usual Chern character $\text{ch}: G_0\left(\mathfrak{M}_d\right)\to \prod_{i\in \mathbb{Z}} H^{\text{BM}}_{2i}(\mathfrak{M}_d, \mathbb{Q})$. The map $\text{ch}^{\text{top}}$ is injective, see the explanations about the map \eqref{subsection5310}.

Davison \cite[Theorem A, Subsection 4.1.1]{Dav1}, \cite[Theorems B and C]{D} proved a BBDG Decompositon Theorem for the complex $R\pi_{d*}\omega_{\mathfrak{M}_d}$ which implies a PBW-type Theorem for the CoHA of points of $S$, see also \cite[Theorem 7.1.6]{KV}. The CoHA is generated by the cohomology of BPS sheaves, which are certain constructible sheaves on $M_d$ for every $d\geq 1$.
The BPS summand in dimension $d$ of $R\pi_{d*}\omega_{\mathfrak{M}_d}$ is $\text{IC}_S$, where $S\hookrightarrow M_d=\text{Sym}^d(S)$ is the small diagonal, see also the proof of Proposition \ref{pp1}.

\subsubsection{}\label{surfa}

Let $(U_j)_{j\in J}$ be a cover of $M_d$ with open subsets. Using the results from Subsection \ref{surfanew}, we obtain that the restriction maps induce an injection \[G_0^{\text{top}}\left(\mathfrak{M}_d\right)\hookrightarrow \bigoplus_{j\in J}G_0^{\text{top}}\left(\pi_d^{-1}(U_j)\right).\]

For a surface $S$, there is also a topological K-theoretic shuffle algebra $\text{SH}^{\text{top}}(S)$ and an algebra morphism $\Phi_S^{\text{top}}: \text{KHA}^{\text{top}}(S)\to \text{SH}^{\text{top}}(S)$ defined as above.


Let $U\subset S$ be a small open analytic subset of a point $p$ such that $\text{Sym}^d(U)\subset M_d$ is an open neighborhood of $(p,\cdots, p)\in M_d$ as in \eqref{dd1}.
One way to obtain $\Phi_S^{\text{top}}$ is to glue the analogous of the composition of the local maps \eqref{iota} and \eqref{k}:
\begin{equation}\label{kiota}
    k_d^*\iota_{d*}: G_i^{\text{top}}\left(\pi_d^{-1}\left(\text{Sym}^d(U)\right)\right)\to \left(K_i^{\text{top}}\left(U^d\right)(z_1,\cdots,z_d)\right)^{\mathfrak{S}_d}\end{equation} 
    for $i\in\mathbb{Z}$.
    To obtain a cover of $M_d$, we also need to use the corresponding maps for the open neighborhoods $\times_{i=1}^k \text{Sym}^{d_i}(U_i)\subset M_d$ associated to small neighborhoods $p_i\in U_i\subset S$.
We denote the image of $\Phi^{\text{top}}_S$ by $\text{KHA}'^{\text{top}}(S)$.

\subsection{A coproduct-type map}
Let $C>A$ be in $U^d_w$. The construction \eqref{adjoint3} also provides a functor
\[\Delta_{AB}: \mathbb{M}_A\to \mathbb{M}_C.\]
Consider pairs $(b,t)$, $(c,s)$, $(e,v)$, $(f,u)$, and $(d,w)$ in $\mathbb{N}\times\mathbb{Z}$ such that 
\[(e,v)+(f,u)=(b,t)+(c,s)=(d,w).\]
We denote by $A$ the two term partition $(e,v)$, $(f,u)$, and by $B$ the two term partition $(b,t)$, $(c,s)$. Assume that $A$ and $B$ are in $U^d_w$. 
Let $\textbf{S}$ be the set of partitions $C$ of $U^d_w$ with terms $(a_i, \alpha_i)$ for $1\leq i\leq 4$, some of them possibly zero, such that
\begin{align*}
    (a_1, \alpha_1)+(a_2, \alpha_2)&=(e,v),\\
    (a_3, \alpha_3)+(a_4, \alpha_4)&=(f,u),\\
    (a_1, \alpha_1)+(a_3, \alpha_3)&=(b,t),\\
    (a_2, \alpha_2)+(a_4, \alpha_4)&=(c,s),
\end{align*}
Define $\widetilde{m \boxtimes m}:=m\left(1\boxtimes \text{sw}\boxtimes 1\right).$ 

\begin{thm}\label{thm51}
The following diagram commutes:
\begin{equation*}
    \begin{tikzcd}
    K_0'^{\text{top}}(\mathbb{M}_A)
    \arrow[d, "\Delta_{AC}"]\arrow[r, "m"]& 
    K_0'^{\text{top}}\left(\mathbb{M}(d)_w\right)\arrow[d, "\Delta_B"]\\
    \bigoplus_{C\in \textbf{S}}
    K_0'^{\text{top}}(\mathbb{M}_C)
    \arrow[r, "\widetilde{m\boxtimes m}"]& K_0'^{\text{top}}(\mathbb{M}_B).
    \end{tikzcd}
\end{equation*}
\end{thm}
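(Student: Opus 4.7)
The plan is to follow the same strategy used to prove Theorem \ref{thm}: reduce the coproduct-compatibility to a local statement on an analytic cover of $M_d$ where the moduli stack is identified with the Koszul stack associated to the quiver $\widetilde{J}$ with potential $\widetilde{W}$, and then invoke the local result \cite[Proposition 5.5]{P}. Concretely, I fix a cover $(U_j)_{j\in J}$ of $M_d$ by analytic open subsets of the form $\times_{i=1}^k \text{Sym}^{d_i}(V_i)$, where $V_i$ is a small contractible neighborhood of a point $p_i\in S$, so that \eqref{dd1} applies. By Subsection \ref{surfa}, the restriction map
\[G_0^{\text{top}}\bigl(\mathfrak{M}_d\bigr)\hookrightarrow \bigoplus_{j\in J}G_0^{\text{top}}\bigl(\pi_d^{-1}(U_j)\bigr)\]
is injective, and the analogue holds for each $\mathbb{M}_A$. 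It therefore suffices to verify the equality of the two compositions in the diagram after pullback to each $\pi_d^{-1}(U_j)$.

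Next, on each such $U_j$, the isomorphism $\nu$ in \eqref{dd1} identifies $\widehat{\mathfrak{M}_{d,p}}$ with $\widehat{\PP(\dd)}=\times_{i=1}^k \widehat{\mathfrak{C}(d_i)}$, and the multiplication $m$ for the surface is compatible with the multiplication on $\bigoplus_d D^b(\mathfrak{C}(d))$ by the commutative diagram preceding Subsection \ref{M}. The adjoints $\Delta_{AC}$ and $\Delta_B$ constructed in \eqref{adjoint3} are built out of $p_{\lambda*}$, $q_\lambda^!$, duality, and projection onto isotypic pieces, and by Subsection \ref{ad2} each of these operations is compatible with restriction to $\widehat{\mathfrak{M}_{d,p}}$, and hence with the quiver model. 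Passing to topological K-theory, the local pieces of the diagram of the theorem coincide with the analogous diagram for $\text{HA}(\widetilde{J},\widetilde{W})$ studied in \cite[Proposition 5.5]{P}, which asserts exactly the commutativity $\Delta_B\circ m=\widetilde{m\boxtimes m}\circ\Delta_{AC}$ after summing over the four-term refinements $C\in\textbf{S}$.

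The main obstacle is making the reduction to the quiver model rigorous at the level of topological K-theory rather than in algebraic K-theory. Two points require care. First, I must check that the adjoint functors $\Delta_{?}$ descend to well-defined maps on $\text{KHA}'^{\text{top}}(S)$, i.e.\ on the image of $\Phi^{\text{top}}_S$. This follows from \eqref{kiota} and the fact that $\Delta_A$ is constructed from functors that induce maps on topological K-theory in a manner compatible with the shuffle algebra morphism, so both rows of the diagram descend to $\text{KHA}'^{\text{top}}$. Second, the restriction to $\widehat{\mathfrak{M}_{d,p}}$ must commute with the constructions $m$ and $\Delta$; here the key ingredient is the base-change isomorphism in \eqref{isoHalP} combined with proper base change for $p_{\lambda*}$ and flat base change for $q_\lambda^*$, which together show that restricting either side of the diagram to an open $U_j$ yields the local analogue.

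Finally, once the problem is pushed to the quiver side, the combinatorial identification of the index set $\textbf{S}$ with the set of four-term refinements appearing in \cite[Proposition 5.5]{P} is immediate from the definition of $U^d_w$, since the condition $\frac{w_1}{d_1}=\cdots=\frac{w_k}{d_k}$ in \eqref{equ} matches precisely the partitions $A=(d_i,w_i)$ in the preprojective/Jordan case. Thus the commutativity of the diagram over each open piece of the cover $(U_j)_{j\in J}$, combined with the injectivity statement in Subsection \ref{surfa}, yields the desired equality of the two compositions in $K_0'^{\text{top}}(\mathbb{M}_B)$.
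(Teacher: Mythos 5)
Your proposal follows essentially the same route as the paper: reduce to a local statement over an analytic cover of $M_d$ using the injectivity of restriction from Subsection \ref{surfa}, identify the local model with the quiver $(\widetilde{J},\widetilde{W})$ over an open $U\subset\mathbb{A}^2_{\mathbb{C}}$ via \eqref{dd1}, and invoke the known product--coproduct compatibility there. The one correction is the citation: the relevant local input is \cite[Theorem 5.3]{P} (see also \cite[Theorem 5.2]{P2}), proved via explicit shuffle formulas for $(\widetilde{J},0)$, rather than \cite[Proposition 5.5]{P}, which is the PBW statement deduced from that compatibility.
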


\begin{proof}
The construction of the maps $(\Phi_S)_d=k_d^*\iota_{d*}$ is local on $M_d$. For an analytic open subset $O\subset M_d$, let $\mathbb{M}_A(O)$ be the subcategory of $D^b_o\left(\pi_d^{-1}(O)\right)$, see Subsection \ref{oo}, generated by restrictions of sheaves on $\mathbb{M}_A\subset D^b\left(\mathfrak{M}_d\right)$.
By the discussion in Subsection \ref{surfa}, 
it suffices to show the analogous statement for $U\subset S$ an open analytic subset as in Subsection \ref{surfa}:
\begin{equation}
    \begin{tikzcd}
    K_0'^{\text{top}}(\mathbb{M}_A(O))
    \arrow[d, "\Delta_{AC}"]\arrow[r, "m"]& 
    K_0'^{\text{top}}\left(\mathbb{M}(d; O)_w\right)\arrow[d, "\Delta_B"]\\
    \bigoplus_{C\in \textbf{S}}
    K_0'^{\text{top}}(\mathbb{M}_C(O))
    \arrow[r, "\widetilde{m\boxtimes m}"]& K_0'^{\text{top}}(\mathbb{M}_B(O)),
    \end{tikzcd}
\end{equation}
where $O:=\text{Sym}^d(U)\subset M_d$.
We may assume that $U\subset \mathbb{A}^2_{\mathbb{C}}$. The argument follows as in the global case $\mathbb{A}^2_\mathbb{C}$ using explicit shuffle formulas for the maps involved in the quiver with zero potential $\left(\widetilde{J}, 0\right)$, see \cite[Theorem 5.3]{P}, \cite[Theorem 5.2]{P2}. 
Note that the formula for $\widetilde{m\boxtimes m}$ is more complicated in loc. cit., but it simplifies to the above description for the quiver $\widetilde{J}$ and by Proposition \ref{comp}. 
\end{proof}

\subsection{Primitive generators of the KHA} 
\subsubsection{}\label{subsection531}
We first discuss two preliminary results.

\begin{prop}\label{pp1}
Let $S$ be a surface with $H^1(S,\mathbb{Q})=0$. Then $H^{\text{BM}}_{\text{odd}}\left(\mathfrak{M}_d,\mathbb{Q}\right)=0$. 
\end{prop}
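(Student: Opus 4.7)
The plan is to combine Davison's BBDG Decomposition Theorem recalled in Subsection \ref{surfanew} with standard Poincar\'e duality and K\"unneth computations.

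First I would express the Borel--Moore homology as hypercohomology on the coarse moduli space,
\[H^{\text{BM}}_k(\mathfrak{M}_d, \mathbb{Q}) = \mathbb{H}^{-k}\bigl(M_d,\, R\pi_{d*}\omega_{\mathfrak{M}_d}\bigr),\]
and invoke Davison's theorem to decompose $R\pi_{d*}\omega_{\mathfrak{M}_d}$ as a direct sum of shifted IC sheaves $\mathrm{IC}_{Z}[2n]$ supported on closed strata $Z \subset M_d$ of the form $\mathrm{Sym}^{\underline{d}}(S) := \prod_i \mathrm{Sym}^{d_i}(S)$, with $\underline{d}$ a partition of $d$. The cohomological integrality part of Davison's statement guarantees that all the shifts appearing are even.

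Taking hypercohomology then reduces the desired odd-degree vanishing to showing $\mathrm{IH}^{\mathrm{odd}}\bigl(\mathrm{Sym}^{\underline{d}}(S), \mathbb{Q}\bigr) = 0$ for every partition $\underline{d}$ of $d$. Since $\mathrm{Sym}^n(S) = S^n/\mathfrak{S}_n$ is a finite quotient of the smooth variety $S^n$, one has $\mathrm{IH}^*\bigl(\mathrm{Sym}^n(S), \mathbb{Q}\bigr) = H^*(S^n, \mathbb{Q})^{\mathfrak{S}_n}$, which by K\"unneth sits inside $H^*(S, \mathbb{Q})^{\otimes n}$; hence it is enough to prove $H^{\mathrm{odd}}(S, \mathbb{Q}) = 0$. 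Finally, combining the hypothesis $H^1(S, \mathbb{Q}) = 0$ with Poincar\'e duality for the smooth surface $S$, which identifies $H^3(S, \mathbb{Q})$ with $H^1(S, \mathbb{Q})^\vee$, yields $H^{\mathrm{odd}}(S, \mathbb{Q}) = 0$ and hence the proposition.

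The hard step is pinning down the precise form of Davison's decomposition in this setting --- specifically, confirming that the perverse summands are intersection complexes supported on products of symmetric powers of $S$ and that all cohomological shifts are even. Both facts follow from Davison's cohomological integrality theorem for CoHAs of dimension zero sheaves on surfaces together with \cite[Theorem 7.1.6]{KV}, so the essential input is to assemble these references correctly in the present formulation rather than to perform a new computation.
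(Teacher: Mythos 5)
Your proposal follows essentially the same route as the paper: invoke Davison's decomposition theorem for $R\pi_{d*}\omega_{\mathfrak{M}_d}$, use the evenness of the shifts, and reduce to the vanishing of odd intersection cohomology of symmetric powers of $S$, which follows from $H^{\text{odd}}(S,\mathbb{Q})=0$ via the finite-quotient description and K\"unneth. One point where your sketch is looser than the paper: the summands supported on the smaller strata $M_{\underline{d}}$ are of the form $IC_{M_{\underline{d}}}(\mathcal{L})$ for possibly nontrivial local systems $\mathcal{L}$, so the reduction to $IH^{\text{odd}}\bigl(\mathrm{Sym}^{\underline{d}}(S),\mathbb{Q}\bigr)=0$ with constant coefficients is not immediate; the paper handles this by observing that these lower summands also occur in the decomposition of $r_{\underline{d}*}\omega_{\mathfrak{M}_{\underline{d}}}$ for the products $\times_i\mathfrak{M}_{d_i}$ and running an induction on $d$, so that only the full-support summand $IC_{M_d}$ needs the explicit symmetric-power computation. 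Also note that your appeal to Poincar\'e duality in the form $H^3(S,\mathbb{Q})\cong H^1(S,\mathbb{Q})^{\vee}$ is only literally correct for compact $S$; for a general smooth surface one should phrase the duality between ordinary and compactly supported cohomology (as the paper implicitly does by working with $IH_c$ after dualizing).
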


\begin{proof}
Recall the map 
\[\pi_d: \mathfrak{M}_d\to M_d.\]
We use the Decomposition Theorem of \cite[Theorem C, Subsection 1.2.3]{D} for the sheaf $R\pi_{d*}\left(\omega_{\mathfrak{M}_d}\right)$. Then the summands have even shifts, which follows by the direct computation for the stack of commuting matrices, or alternatively from the computation of the BPS sheaves of $\left(\widetilde{J}, \widetilde{W}\right)$ \cite[Theorem 5.1]{D2}. 
The summands are of the form $IC_{M_{\underline{d}}}(\mathcal{L})$ for certain local systems $\mathcal{L}$, and these summands appear also in the Decomposition Theorem for the complex $r_{\dd *} \left(\omega_{\mathfrak{M}_{\dd}}\right)$, where
\[r_{\dd}: \mathfrak{M}_{\dd}:=\times_{i=1}^k \mathfrak{M}_{d_i}\to \times_{i=1}^k M_{d_i}\to M_d,\] 
see for example \cite[Proposition 1.5]{CH}, also
see \cite[Proposition 3.5, Theorem 4.6]{MR} for a stronger statement in the local case. The only full dimensional support summand is $IC_{M_d}$. 
By induction on $d\in \mathbb{N}$ and after taking the dual, it suffices to check that 
$IH^{\text{odd}}_c(M_d, \mathbb{Q})=0$. The variety $M_n$ has finite quotient singularities, so it suffices to show that $IH^{\text{odd}}_c(M_d, \mathbb{Q})\cong H^{\text{odd}}_c(M_d, \mathbb{Q})=0$. 
This is true because \[H^{\cdot}_c(M_d, \mathbb{Q})
\cong \left(H^{\cdot}_c(S^d, \mathbb{Q})\right)^{\mathfrak{S}_d}
\cong \left(H^{\cdot}_c(S, \mathbb{Q})^{\otimes d}\right)^{\mathfrak{S}_d}.\]
\end{proof}

Let $X$ and $Y$ be possibly singular varieties. Assume $X$ has an action of a reductive group $G$ and let $\mathfrak{X}=X/G$. Recall the Atiyah-Hirzebruch isomorphism
\[G_i^{\text{top}}(Y)_{\mathbb{Q}}\xrightarrow{\sim} \bigoplus_{j\in\mathbb{Z}} H^{\text{BM}}_{i+2j}(Y, \mathbb{Q}).\]
Using Totaro's approximations for the stack $\mathfrak{X}$, 
there is an inclusion map 
\begin{equation}\label{subsection5310}
    G_i^{\text{top}}(\mathfrak{X})_{\mathbb{Q}}\hookrightarrow \prod_{j\in\mathbb{Z}} H^{\text{BM}}_{i+2j}(\mathfrak{X}, \mathbb{Q}).
\end{equation}
 Using Proposition \ref{pp1} and the Künneth Theorem for $G^{\text{top}}_\cdot$, we obtain:

\begin{cor}\label{ku}
Let $S$ be a surface with $H^1(S, \mathbb{Q})=0$ and let $d, e\in \mathbb{N}$. Then $G_0^{\text{top}}\left(\mathfrak{M}_d\times \mathfrak{M}_e\right)_{\mathbb{Q}}\cong G_0^{\text{top}}\left(\mathfrak{M}_d\right)_{\mathbb{Q}}\otimes G_0^{\text{top}}\left(\mathfrak{M}_e\right)_{\mathbb{Q}}$. 
\end{cor}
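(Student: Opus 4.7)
The plan is to combine Proposition \ref{pp1} with the Künneth formula for Borel--Moore homology, using the Atiyah--Hirzebruch comparison \eqref{subsection5310} to transfer the result from rational homology to topological K-homology. First, I would observe that the inclusion
$$G_0^{\text{top}}(\mathfrak{M}_d)_{\mathbb{Q}}\hookrightarrow \prod_{j\in\mathbb{Z}} H^{\text{BM}}_{2j}(\mathfrak{M}_d,\mathbb{Q})$$
of \eqref{subsection5310} comes from the Atiyah--Hirzebruch spectral sequence on Totaro approximations, and that this spectral sequence degenerates at $E_2$ as soon as the odd Borel--Moore homology vanishes. By Proposition \ref{pp1}, this vanishing holds for $\mathfrak{M}_d$ whenever $H^1(S,\mathbb{Q})=0$, so the above map is in fact an isomorphism.

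Next, I would apply the classical Künneth theorem for Borel--Moore homology with rational coefficients. Since both $H^{\text{BM}}_\cdot(\mathfrak{M}_d,\mathbb{Q})$ and $H^{\text{BM}}_\cdot(\mathfrak{M}_e,\mathbb{Q})$ are concentrated in even degrees, the $\operatorname{Tor}$ contribution vanishes and the external product yields an isomorphism
$$H^{\text{BM}}_\cdot(\mathfrak{M}_d,\mathbb{Q})\otimes H^{\text{BM}}_\cdot(\mathfrak{M}_e,\mathbb{Q})\xrightarrow{\sim} H^{\text{BM}}_\cdot(\mathfrak{M}_d\times\mathfrak{M}_e,\mathbb{Q}).$$
In particular, the odd Borel--Moore homology of the product also vanishes, so the Atiyah--Hirzebruch comparison is an isomorphism for $\mathfrak{M}_d\times\mathfrak{M}_e$ as well. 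Because the topological Chern character is multiplicative under external products, chasing the two isomorphisms above identifies the external product map $G_0^{\text{top}}(\mathfrak{M}_d)_{\mathbb{Q}}\otimes G_0^{\text{top}}(\mathfrak{M}_e)_{\mathbb{Q}}\to G_0^{\text{top}}(\mathfrak{M}_d\times\mathfrak{M}_e)_{\mathbb{Q}}$ with the tensor product of BM homology groups, yielding the desired isomorphism.

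The main technical obstacle is that $\mathfrak{M}_d$ is a stack rather than a variety, so both the Atiyah--Hirzebruch comparison and Künneth must be set up via Totaro-style approximations by smooth varieties (as is implicit in \eqref{subsection5310}). The key point is that the vanishing of odd degrees from Proposition \ref{pp1} controls the behaviour of the relevant inverse systems: there are no $\varprojlim{}^1$ obstructions and no odd-degree cross terms, so the Künneth isomorphism at finite level passes to the limit to give the stacky statement, and the spectral sequence arguments degenerate uniformly in the approximations.
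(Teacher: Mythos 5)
Your overall strategy (Proposition \ref{pp1} plus a K\"unneth argument) is the same as the paper's, but the route you take through Borel--Moore homology contains a step that fails. You claim that once $H^{\text{BM}}_{\text{odd}}(\mathfrak{M}_d,\mathbb{Q})=0$, the comparison map \eqref{subsection5310} becomes an \emph{isomorphism}. For quotient stacks this is false: already for $\mathfrak{X}=\mathrm{pt}/\mathbb{C}^*$ one has $K^{\text{top}}_0(\mathfrak{X})_{\mathbb{Q}}\cong\mathbb{Q}[t,t^{-1}]$ (Atiyah--Segal, uncompleted, per \cite{HLP}), while $\prod_j H^{2j}(B\mathbb{C}^*,\mathbb{Q})\cong\mathbb{Q}[[u]]$; the Chern character $t\mapsto e^u$ is injective but far from surjective, even though all odd cohomology vanishes. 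The obstruction is not a $\varprojlim^1$ issue or failure of degeneration in the Totaro approximations, but the Atiyah--Segal completion discrepancy between the K-theory of the stack and the limit of the K-theories of the approximations. Consequently you cannot identify $G_0^{\text{top}}(\mathfrak{M}_d)_{\mathbb{Q}}$ with even BM homology and transport the homological K\"unneth isomorphism across; knowing only that both vertical maps are injections does not let you conclude that the external product on $G_0^{\text{top}}$ is surjective.

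The paper's argument is shorter and sidesteps this. One applies the K\"unneth theorem directly to $G^{\text{top}}_{\cdot}$: since $D^b(\mathfrak{M}_d\times\mathfrak{M}_e)\cong D^b(\mathfrak{M}_d)\otimes D^b(\mathfrak{M}_e)$ and Blanc's functor takes values in $KU$-modules, one gets rationally (no Tor over the graded field $\mathbb{Q}[\beta^{\pm 1}]$) a decomposition
\[
G_0^{\text{top}}(\mathfrak{M}_d\times\mathfrak{M}_e)_{\mathbb{Q}}\cong \left(G_0^{\text{top}}(\mathfrak{M}_d)_{\mathbb{Q}}\otimes G_0^{\text{top}}(\mathfrak{M}_e)_{\mathbb{Q}}\right)\oplus\left(G_1^{\text{top}}(\mathfrak{M}_d)_{\mathbb{Q}}\otimes G_1^{\text{top}}(\mathfrak{M}_e)_{\mathbb{Q}}\right),
\]
by $2$-periodicity. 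Proposition \ref{pp1} is then used only to kill the second summand: the \emph{injection} \eqref{subsection5310} in odd degree, $G_1^{\text{top}}(\mathfrak{M}_d)_{\mathbb{Q}}\hookrightarrow\prod_j H^{\text{BM}}_{1+2j}(\mathfrak{M}_d,\mathbb{Q})=0$, forces $G_1^{\text{top}}(\mathfrak{M}_d)_{\mathbb{Q}}=0$. Injectivity is all that is needed, which is exactly what the paper establishes. If you reorganize your argument so that Proposition \ref{pp1} is used only for this vanishing, and invoke K\"unneth at the level of $G^{\text{top}}$ rather than of $H^{\text{BM}}$, the proof goes through.
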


\subsubsection{}\label{pbwth}
Using induction, Theorem \ref{thm}, and Corollary \ref{ku}, we see that
\begin{equation}\label{kunn}
    \bigotimes_{i=1}^k K_0'^{\text{top}}\left(\mathbb{M}(d_i)_{w_i}\right)_{\mathbb{Q}}\cong
 K'^{\text{top}}_0(\mathbb{M}_A)_{\mathbb{Q}}
\end{equation}
for $A=(d_i, w_i)_{i=1}^k$ in $V^d_w$ or $U^d_w$.

We define inductively on $(d,w)$ a (split) subspace \[\ell_{d,w}: P(d)_w\hookrightarrow K'^{\text{top}}_0\left(\mathbb{M}(d)_w\right)_{\mathbb{Q}}\] with a surjection
\[\pi_{d,w}: K_0'^{\text{top}}\left(\mathbb{M}(d)_w\right)_{\mathbb{Q}}\twoheadrightarrow P(d)_w\] such that $\pi_{d,w}\ell_{d,w}=\text{id}$.
Let $P(1)_w:=K'^{\text{top}}_0\left(\mathbb{M}(1)_w\right)_{\mathbb{Q}}.$ 
Let $A=(d_i, w_i)_{i=1}^k\in U^d_w$ with $k\geq 2$. Let $\pi_A$ be the natural projection
\[\pi_A:=\otimes_{i=1}^k \pi_{d_i, w_i}: K'^{\text{top}}_0(\mathbb{M}_A)_{\mathbb{Q}}\twoheadrightarrow \otimes_{i=1}^k P(d_i)_{w_i}.\]
Let $A=(d_i,w_i)_{i=1}^k\in U^d_w$ with $A>(d,w)$, or alternatively with $k\geq 2$. Let $P_A:=\otimes_{i=1}^k P(d_i)_{w_i}$ and let $\pi_A$ be the natural projection:
\[\pi_A:=\otimes_{i=1}^k \pi_{d_i, w_i}: K'^{\text{top}}_0(\mathbb{M}_A)_{\mathbb{Q}}\twoheadrightarrow P_A.\]
For $\sigma\in \mathfrak{S}_k$, denote by $\sigma(A)$ the partition $\left(d_{\sigma(i)}, w_{\sigma(i)}\right)_{i=1}^k$.
Let $K_A$ be the kernel of the map
\[\left(\bigoplus_{\sigma\in\mathfrak{S}_{k}}
\pi_{\sigma(A)}\right) \left(\bigoplus_{\sigma\in\mathfrak{S}_{k}}
\sigma\right)\Delta_{A}:
K'^{\text{top}}_0(\mathbb{M}(d)_w)_{\mathbb{Q}}\to \bigoplus_{\sigma\in\mathfrak{S}_{k}}P_{\sigma(A)}.\]
Define
\[P(d)_w:=\bigcap_{A>(d,w)}
 K_A.\]
 Given the construction of these spaces of primitive generators $P(d)_w$, Theorem \ref{thm2} follows formally from Theorem \ref{thm51} exactly as in \cite[Proposition 5.5]{P}, \cite[Theorem 5.13]{P2}.

\end{document}